\newtheorem{theorem}{Theorem}[section]
\newtheorem{lemma}[theorem]{Lemma}
\newtheorem{example}[theorem]{Example}
\newtheorem{proposition}[theorem]{Proposition}
\theoremstyle{definition}
\newtheorem{definition}[theorem]{Definition}
\theoremstyle{remark}
\newtheorem{remark}[theorem]{Remark}
\numberwithin{equation}{section}
\newcommand{\umon}[1]{\pmb{u}^{\pmb{#1}}}
\newcommand{\wmon}[1]{\pmb{w}^{\pmb{#1}}}
\newcommand{\xmon}[1]{x^{\pmb{#1}}}
\newcommand{\ymon}[1]{y^{\pmb{#1}}}
\newcommand{\red}[1]{\textcolor{red}{#1}}
\newcommand{\sm}[1]{{\scriptstyle (#1)}}
\newcommand{\EE}{{\mathbb{S}^2}}
\newcommand{\N}{\mathbb{N}}
\newcommand{\R}{\mathbb{R}}
\newcommand{\RR}{\mathbb{R}^2}
\newcommand{\C}{\mathbb{C}}
\newcommand{\eps}{\varepsilon}
\newcommand{\Cl}{\mathop{\rm Cl}\nolimits}
\newcommand{\Inte}{\mathop{\rm Int}\nolimits}
\title[Topological classification of limit periodic sets]{Topological classification of limit periodic sets of polynomial planar vector fields}
\author[A.~Belotto]{Andr\'e Belotto da Silva}
\author[J.~G.~Esp\'{i}n]{Jose Gin\'es Esp\'{i}n Buend\'{i}a}
\address{AB: Université Paul Sabatier, Institut de Mathématiques de Toulouse, 118 route de Narbonne, F-31062 Toulouse Cedex 9, France}
\email[A.~Belotto]{{andre.belotto\_da\_silva@math.univ-toulouse.fr}}
\address{JE: Universidad de Murcia, Departamento de Matem\'aticas, Campus de Espinardo, 30100 Murcia, Spain}
\email[J.~G.~Esp\'{i}n]{josegines.espin@um.es}
\subjclass[2010]{Primary 34C07, 34C08; Secondary 14P10, 37G15}
\keywords{Limit Periodic Sets, Ordinary Differential Equations, Semi-algebraic sets}
\begin{document}

\maketitle

\section*{Abstract}
We characterize the limit periodic sets of families of algebraic planar vector fields up to homeomorphisms. We show that any limit periodic set is topologically equivalent to a compact and connected semialgebraic set of the sphere {of dimension $0$ or $1$}. Conversely, we show that any compact and connected semialgebraic set of the sphere {of dimension $0$ or $1$} can be realized as a limit periodic set.

\setcounter{tocdepth}{3}

\section{Introduction}

The subject of this manuscript is the structure of limit periodic sets of planar polynomial vector fields, a central object in bifurcation theory and in the treatment of the Hilbert $16^{th}$ problem (see Roussarie's book \cite{r98} or Il'yashenko and Yakovenko's book \cite{iy95}). For example, the program of Dumortier, Roussarie and Rousseau \cite{drr94} to solve the existential part of the $16^{th}$ Hilbert problem for quadratic vector fields is divided in $121$ case-by-case analysis based on the limit periodic sets. Following the spirit of \cite{jl07}, {\cite{jp09}} or \cite{lr04}, our objective is to characterize topologically all limit periodic sets of polynomial families of planar vector fields.

We consider a real algebraic manifold $\Lambda$ of dimension $n \geq 1$, which we call \textit{parameter space}. A \textit{family of planar vector fields} $(X_{\lambda})_{\lambda \in \Lambda}$, is a vector field $X_{\lambda}$ defined on $\mathbb{R}^2 \times \Lambda$ which is tangent to the fibres of the projection $\pi: \mathbb{R}^2 \times \Lambda \to \Lambda$. For any parameter $\lambda_0 \in \Lambda$, we denote by $X_{\lambda_0}$ the restriction of $X_{\lambda}$ to $\mathbb{R}^2 \times \{\lambda_0\}$, which we identify with $\mathbb{R}^2$. We say that the family $(X_{\lambda})_{\lambda \in \Lambda}$ is \textit{polynomial} if for each $\lambda_0 \in \Lambda$ there exist local coordinate systems $x= (x_1,x_2)$ of $\mathbb{R}^2$ and $\lambda= (\lambda_1, \ldots, \lambda_n)$ centered at $\lambda_0$ such that $X_{\lambda}(x) = A_1(x,\lambda) \partial_{x_1} + A_2(x,\lambda) \partial_{x_2}$ where $A_1$ and $A_2$ are polynomials. 

Given any polynomial vector field $X$ on $\R^2$, we shall extend it to an analytic vector field, which we denote by $\hat{X}$, in the sphere $\mathbb{S}^2$ via a Bendixson compactification (see details in Section \ref{bendixson}). Also, for every $A \subset \RR$, we will write $\hat{A}$ to denote the closure of $A$ seen as a subset in the one-point compactification $\mathbb{S}^2$ of $\mathbb{R}^2$. 

We recall the definition of limit periodic sets, which was first introduced by Fran\c{c}oise and Pugh \cite[pp. 141]{fp86}. \begin{definition}
A \textit{limit periodic set} for a polynomial family of planar vector fields $(X_{\lambda})_{\lambda \in \Lambda}$ at the parameter $\lambda_0$ is a closed set $\Gamma \subset \mathbb{R}^2$ for which there exist a sequence $(\lambda_{n})_{n}$ in the parameter space $\Lambda$ and a sequence $(\gamma_n)_n$ of topological circles in $\R^2$ such that $(\lambda_n)_n$ converges to $\lambda_0$ in $\Lambda$, $(\gamma_{n})_n$ converges to $\hat{\Gamma}$ in the Hausdorff topology of $\mathbb{S}^2$ and, for every $n$, the vector field $X_{\lambda_n}$ has $\gamma_{n}$ as a limit cycle.
\end{definition}

In terms of the structure of limit periodic sets, it is well-known that the Poincar\'{e}-Bendixson Theorem implies:

\begin{proposition}(See \cite[Proposition 1]{fp86})\label{prop:classical}
Let $(X_{\lambda})_{\lambda \in \Lambda}$ be a polynomial family of planar vector fields and $\Gamma$ be a limit periodic set at the parameter $\lambda_0$. Then $\hat{\Gamma}$ is one of the following: (i) a singular point of $\hat{X}_{\lambda_0}$; (ii) a periodic orbit of $\hat{X}_{\lambda_0}$; (iii) a polycycle of $\hat{X}_{\lambda_0}$ (that is, a cyclic ordered collection of singular points $a_1, \ldots, a_k$ and arcs, given by integral curves, connecting them in the specific order: the $j$th arc connects $a_j$ with $a_{j+1}$); (iv) a degenerate limit cycle, that is, it contains non-isolated singularities of the vector field $\hat{X}_{\lambda_0}$.
\end{proposition}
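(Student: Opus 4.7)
The plan begins by showing that $\hat{\Gamma}$ is non-empty, compact and connected---immediate from the Hausdorff convergence $\gamma_n\to\hat{\Gamma}$ on the compact sphere $\mathbb{S}^2$, since each $\gamma_n$ is a connected compact set---and, crucially, that it is invariant under the flow of $\hat{X}_{\lambda_0}$. For invariance, I take $p\in\hat{\Gamma}$ and $p_n\in\gamma_n$ with $p_n\to p$. Since the family is polynomial, the Bendixson compactification makes $\hat{X}_\lambda$ depend continuously on $\lambda$, so continuous dependence of solutions on parameters and initial data gives $\phi^t_{\hat{X}_{\lambda_n}}(p_n)\to\phi^t_{\hat{X}_{\lambda_0}}(p)$ uniformly on compact time intervals; since $\phi^t_{\hat{X}_{\lambda_n}}(p_n)\in\gamma_n$ and $\gamma_n\to\hat{\Gamma}$ in Hausdorff distance, the limit lies in the closed set $\hat{\Gamma}$.

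Next, I would analyse how $\hat{\Gamma}$ meets the singular locus $\mathrm{Sing}(\hat{X}_{\lambda_0})$, an analytic subvariety of $\mathbb{S}^2$ of dimension at most one. If $\hat{\Gamma}$ contains a point which is non-isolated in $\mathrm{Sing}(\hat{X}_{\lambda_0})$, then case (iv) holds. Otherwise, the set $F:=\hat{\Gamma}\cap\mathrm{Sing}(\hat{X}_{\lambda_0})$ consists only of isolated singularities; compactness of $\hat{\Gamma}$ then forces $F$ to be finite.

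With $\hat{\Gamma}$ a compact, connected, invariant set containing only the finite set $F$ of equilibria, I would apply the Poincar\'e--Bendixson theorem and split into three subcases. If $F=\hat{\Gamma}$, connectedness reduces $\hat{\Gamma}$ to a single equilibrium, giving (i). If $F=\emptyset$, then any trajectory in $\hat{\Gamma}$ has its $\omega$-limit set equal to a periodic orbit $\Pi\subset\hat{\Gamma}$; invoking that $\hat{\Gamma}$ is a Hausdorff limit of Jordan curves, and that $\Pi$ separates $\mathbb{S}^2$ into two invariant disks each of which intersects $\hat{\Gamma}$ in a closed invariant singularity-free set, a standard planar-dynamics argument forces those intersections to be empty, so $\hat{\Gamma}=\Pi$, that is (ii). Finally, if $\emptyset\neq F\subsetneq\hat{\Gamma}$, then each regular orbit in $\hat{\Gamma}$ has both its $\alpha$- and $\omega$-limits in $F$, so $\hat{\Gamma}$ is the union of $F$ together with finitely many heteroclinic or homoclinic orbits between the singularities in $F$, forming a graph which yields (iii).

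The main obstacle is verifying that this graph is an honest \emph{cyclic} polycycle, rather than a branched configuration. Poincar\'e--Bendixson alone does not order the connecting arcs in a cyclic manner, nor does it prevent several arcs from converging to the same singularity along the same local separatrix. The cyclic structure must be imposed by the fact that each $\gamma_n$ is a Jordan curve bounding two disks in $\mathbb{S}^2$: by tracking, in suitable tubular neighbourhoods of the singularities in $F$ and of the connecting arcs, how $\gamma_n$ traverses them, and then passing to the Hausdorff limit, one recovers the required cyclic ordering, as in the treatment of Fran\c{c}oise and Pugh \cite{fp86}.
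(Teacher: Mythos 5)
The paper itself does not prove Proposition~\ref{prop:classical}; it is quoted from \cite{fp86}, and the closest material in the text is the chain Lemma~\ref{InvConn} (compactness, connectedness, invariance), Lemma~\ref{transversal} (a limit periodic set meets any transverse section at most once) and Lemma~\ref{final_lemma}. Your opening steps reproduce Lemma~\ref{InvConn} verbatim, and your Poincar\'e--Bendixson case split on $F=\hat{\Gamma}$, $F=\emptyset$, $\emptyset\neq F\subsetneq\hat{\Gamma}$ is the standard route, so the architecture is right.

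The concrete weakness is that in the $F=\emptyset$ case, and again when you claim that every regular orbit of $\hat{\Gamma}$ has its $\alpha$- and $\omega$-limit in $F$, you lean on ``$\hat{\Gamma}$ is a Hausdorff limit of Jordan curves'', and that property alone cannot do the work. For example, take $\dot\theta=1$, $\dot r=(r-1)(r-2)$: the union of the two periodic orbits $\{r=1\}$, $\{r=2\}$ and one orbit spiralling between them is a compact, connected, invariant, singularity-free set which \emph{is} a Hausdorff limit of Jordan curves (close up a long piece of the spiral), yet is not a single periodic orbit; moreover its intersection with each complementary disk of $\{r=1\}$ is not closed, contrary to what your argument asserts. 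What actually excludes such configurations --- and what simultaneously rules out $\omega$-limits that are periodic orbits or graphics in the third case, bounds the number of connecting arcs, and underlies the cyclic ordering --- is that each $\gamma_n$ is a \emph{periodic orbit} of $X_{\lambda_n}$, hence meets every transverse section at most once, a property that passes to the Hausdorff limit (Lemma~\ref{transversal}, \cite[Lemma 2, p.~20]{r98}). You should replace the ``standard planar-dynamics argument'' and the ``limit of Jordan curves'' invocations by explicit appeals to this lemma. The final cyclicity step, which you defer to \cite{fp86}, is likewise obtained from this lemma combined with the finite sectorial decomposition at the isolated singular points, exactly as in the proof of Lemma~\ref{final_lemma}; as written, that step remains an acknowledged gap rather than a proof.
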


While the above Proposition provides some key information about the nature of limit periodic sets, it does not fully characterize them. The present paper intends to fulfils this gap. A first characterization {was} provided by Panazzolo and Roussarie in \cite{pr95}, under the additional hypothesis that the first jet of the singular points of $X_{\lambda_0}$ is non-vanishing. {In the same paper, the authors also showed a first example of a limit periodic set which is not topologically in the list of possibilities of the Poincar\'e-Bendixson Theorem \cite[Example 3.1]{pr95}.} {Going further, in \cite{b12}, the first author has presented a class of examples of limit periodic sets which, topologically, are not in the list of possibilities of Poincar\'e-Bendixson Theorem either}. Here, we improve and generalize the construction of \cite{b12} in order to prove the converse of our main result:

\begin{theorem}\label{mainth}
Let $(X_{\lambda})_{\lambda \in \Lambda}$ be a polynomial family of planar  vector fields and $\Gamma$ a limit periodic set. Then there exists a homeomorphism $\varphi: \mathbb{S}^2 \to \mathbb{S}^2$ such that $\varphi(\hat{\Gamma})$ is a compact and connected semialgebraic set {of dimension $0$ or $1$}.

Conversely, if $\Gamma$ is a non-empty closed semialgebraic subset of $\RR$ {of dimension $0$ or $1$} whose compactification $\hat{\Gamma} \subset \EE$ is connected, there exists a polynomial family of planar vector fields $(X_{\lambda})_{\lambda \in \Lambda}$ {having $\Gamma$ as a limit periodic set}.
\end{theorem}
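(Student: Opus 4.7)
The direct direction can be handled by classical tools. Compactness and connectedness of $\hat{\Gamma}$ are inherited from the Hausdorff convergence of the topological circles $\gamma_n$ in $\EE$. Proposition~\ref{prop:classical} already restricts $\hat{\Gamma}$ to be built from singular points of $\hat{X}_{\lambda_0}$ and arcs of integral curves, so in particular $\dim \hat{\Gamma} \leq 1$. To promote $\hat{\Gamma}$ to a semialgebraic set up to homeomorphism, we use that $\hat{X}_{\lambda_0}$ is real analytic on $\EE$; the singular curves, separatrices, and trajectories that compose $\hat{\Gamma}$ are therefore subanalytic, and a subanalytic triangulation of $\EE$ compatible with $\hat{\Gamma}$ produces a homeomorphism $\varphi$ of the sphere sending $\hat{\Gamma}$ onto a finite piecewise-linear graph, which is in particular semialgebraic.

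For the converse, the proof proceeds in three steps. First, by semialgebraic cell decomposition we write $\Gamma = V \sqcup \bigsqcup_j A_j$, where $V$ is a finite set of vertices and each $A_j$ is a smooth semialgebraic open arc contained in a real algebraic curve $C_j$, whose closure meets $V$ (possibly together with the point at infinity of $\EE$). This reduces the problem to realizing an arbitrary connected piecewise-algebraic graph in $\EE$. Second, we build a polynomial vector field $X_0$ on $\RR$ whose singular set contains $V$ together with all arcs $A_j$ required to become curves of non-isolated singularities, and whose flow outside $\Gamma$ rotates once around $\Gamma$ before returning, so that $\Gamma$ behaves as a \emph{generalized polycycle} with a well-defined first return map on a transverse section. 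A convenient ansatz is $X_0 = f\,Y + g\,Y^{\perp}$, where $f$ is a polynomial vanishing on the algebraic curves supporting $\Gamma$, $g$ regulates the rotation, and $Y$ prescribes the local dynamics at each vertex. Third, we embed $X_0$ as $X_{\lambda_0}$ in a polynomial family $(X_\lambda)_{\lambda \in \Lambda}$, generalizing the unfoldings of \cite{b12}, in such a way that the displacement function of the return map has, for a suitable sequence $\lambda_n \to \lambda_0$, isolated zeros defining limit cycles $\gamma_n$ whose Hausdorff limit is exactly $\hat{\Gamma}$.

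The principal obstacle is the second step: given an arbitrary combinatorial type of connected piecewise-algebraic graph — with high-valence vertices, dangling ``antennae'', nested cycles, and arcs of mixed nature (singular curves versus heteroclinic trajectories) — one must produce a single polynomial field whose global phase portrait in a one-sided neighborhood of $\Gamma$ is that of a generalized polycycle. This demands a careful case analysis of local normal forms at each vertex and a global gluing argument that preserves polynomiality, and it is where the construction of \cite{b12} must be substantially enhanced. The first and third steps are then essentially bifurcation-theoretic and follow the classical scheme of unfolding a polycycle into a Hausdorff-convergent sequence of limit cycles.
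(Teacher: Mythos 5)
Both directions of your proposal contain genuine gaps. In the direct direction, the step ``the singular curves, separatrices, and trajectories that compose $\hat{\Gamma}$ are therefore subanalytic'' is unjustified and in general false: individual orbits of an analytic vector field need not be subanalytic (think of flat or spiralling separatrices), so a subanalytic triangulation compatible with $\hat{\Gamma}$ is not available. The paper never claims $\hat{\Gamma}$ is subanalytic, only that it is \emph{topologically equivalent} to a semialgebraic set. Moreover, the real content you are skipping is the finiteness of the graph structure: Proposition~\ref{prop:classical} tells you nothing structural in case (iv) (degenerate limit cycles with non-isolated singularities), and an invariant connected union of orbits could a priori be very wild. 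The paper's Lemma~\ref{final_lemma} establishes that $\hat{\Gamma}$ is a generalized graph (every point a finite star point) by combining Lemma~\ref{transversal} (every transverse section meets $\hat{\Gamma}$ at most once) with the finite sectorial decomposition at isolated singularities and the local factorization $\hat{X}_{\lambda_0}=f\,Y$ at non-isolated ones; only then does a purely topological argument (Lemma~\ref{top_eq_semi}) produce the sphere homeomorphism onto a piecewise-linear, hence semialgebraic, set. You also do not address the case $X_{\lambda_0}\equiv 0$, where $\dim\hat{\Gamma}\leq 1$ is not automatic; the paper handles it separately by Roussarie's monomialization/division argument.

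In the converse direction, your Step 2 --- producing a single polynomial field whose phase portrait near an arbitrary connected piecewise-algebraic graph is that of a ``generalized polycycle'' with a well-defined return map, followed by an unfolding with a displacement function --- is precisely the part you acknowledge as the ``principal obstacle'' and leave entirely open; as written this is a program, not a proof, and it is far from clear it can be carried out for graphs with dangling arcs and high-valence vertices. The paper's construction (Sections~\ref{genericcase}--\ref{unboundedcase}) avoids all of this: one sets $h(x,\lambda)=f_{\Gamma}(x)^2-\lambda\prod_{p\in S}\left(\|x-p\|^2-\lambda^2\right)$ with $S$ a finite set of ``generic'' and ``transition'' points, and takes the perturbed Hamiltonian field \eqref{genVF}, for which $X_{\lambda}(h)=h\|\nabla h\|^2$, so that every regular compact oval of $(h=0)\cap(\lambda=t)$ is automatically a limit cycle; the whole proof then reduces to showing (Lemmas~\ref{lem:1}--\ref{lem:2} and Proposition~\ref{prop:CompactNonGeneric}) that these ovals converge to $\Gamma$ in the Hausdorff topology. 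At $\lambda=0$ the entire set $A_{\Gamma}$ consists of singular points of $X_0$, so no return map, local normal forms at vertices, or gluing argument is ever needed. You would need either to supply the missing polycycle construction in full (including the degenerate configurations that are exactly the novelty of the theorem) or to switch to an explicit level-set construction of this kind.
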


{The following example illustrates the construction performed in Section~\ref{converseproof} to prove the converse of Theorem~\ref{mainth}.}

{
\begin{example}\label{examplemain}
Let $\Gamma \subset \mathbb{R}^2$ be the semi-algebraic set given by:
\[
\Gamma = \left\{(x,y)\in \mathbb{R}^2;\, f(x,y)=y (x^2 + y^2 - 1)=0\text{ and } g(x,y)=x^2+y^2\leq 4  \right\}. 
\]
Following the notation of Subsection~\ref{genericcase}, we consider the set of points $$S = \{ (-2,0),(2,0), (0,1),(0,-1) \}$$ (where notice that $S = Gen(\Gamma) \cup Tr(\Gamma)$ and $NG(\Gamma)=\emptyset$, see Definition~\ref{def:specialPoints}). Now, consider the three variable polynomial
\[
h(x,y,\lambda) = f(x,y)^2 - \lambda \prod_{p\in S}\left(\|(x,y)-p\|^2-\lambda^2\right)
\]
where $\lambda$ will play the role of the parameter of the family of vector fields. Let $t \in \mathbb{R}^{+}$ and note that the level curves $Z_t = \{(x,y) \in \mathbb{R}^2;\, h(x,y,t)=0 \}$ are connected and converge (in the Hausdorff topology) to $\Gamma$ when $t$ goes to zero (c.f. Proposition~\ref{prop:CompactNonGeneric}; see Figure~\ref{exampleEasy}). It follows that the perturbation of the Hamiltonian vector field given by
\[
X_{\lambda} =\left( \frac{\partial h}{\partial y} + h \frac{\partial h}{\partial x}\right)  \partial_{x} + \left(-\frac{\partial h}{\partial x}  + h \frac{\partial h}{\partial y} \right) \partial_{y}
\]
is an polynomial family of planar vector fields which has $\Gamma$ as a limit periodic set for the parameter $\lambda_0=0$ (for every $t>0$, the set $Z_t$ is a limit cycle for $X_{t}$). 
\end{example}
}

\begin{figure}
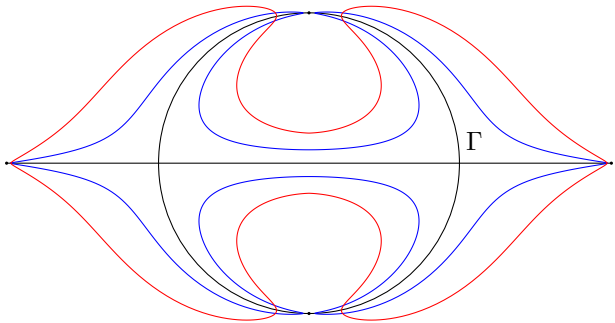


\caption{Limit cycles for $\lambda=0.001$ (red) and $\lambda=0.0001$ (blue) approaching the limit periodic set $\Gamma$.}
\label{exampleEasy}
\end{figure}

{The rest of the paper is divided as follows. In Subsection~\ref{ssec:Rks} we present some remarks about Theorem~\ref{mainth}; the aim of Section~\ref{directproof} is to prove the direct implication of Theorem~\ref{mainth} while Section~\ref{converseproof} deals with the converse one.}

\subsection{Remarks}\label{ssec:Rks}

\begin{itemize}

\item[(I)] {If we restrict our study to compact limit periodic sets of the plane, Theorem \ref{mainth} can be extended to the analytic category. More precisely, with the same ideas and techniques, it is not difficult to show that a compact limit periodic set for an analytic family of vector fields is topologically equivalent to a compact and connected semianalytic set {of dimension $0$ or $1$}; conversely, every compact and connected semianalytic set {of dimension $0$ or $1$} can be realized as a limit periodic set for an analytic family of vector fields.}

\item[(II)] {On the other hand, Theorem \ref{mainth} does not extend, in a trivial, to unbounded limit periodic sets for families of analytic vector fields. The difficulty relies on proving the converse part of the theorem. Let us first exemplify how our methods could be adapted to some unbounded analytic varieties: we claim that the set 
\[
\Gamma_1 = \{(x,y)\in \mathbb{R}^2;\, f_1(x,y)= y^2-\sin(x)^2=0\}
\]
can be realized as a limit periodic set for an analytic family. Indeed, it suffices to replace the function $h$ in Section~\ref{genericcase} by
\[
h(x,y,\lambda,\alpha) = f_1(x,y)^2 - \lambda\left(1- \alpha^2(x^2+y^2)   \right).
\]
We leave it to the reader to verify that the ideas of Sections~\ref{genericcase}~and~\ref{nongenericcase} can be adapted to this function. Nevertheless, it is unclear which connected subsets of
\[
\Gamma_2 = \{(x,y)\in \mathbb{R}^2;\, f_2(x,y) = y^3-y\sin(x)^2=0\}
\] 
can be realized as limit periodic sets. Technically, the difficulty is that our construction for $\Gamma_2$ would demand the use of transition points (defined in the last paragraph of Section \ref{properties_semi}); but the set of those transition points $Tr(\Gamma_2)$ would need to be infinite in this case.}

\item[(III)] A description of the limit periodic sets $\Gamma$ in the spirit of Proposition \ref{prop:classical} follows, {under the hypotehsis that$X_{\lambda_0}\not\equiv0$,} from the proof of Lemma \ref{final_lemma} below. More precisely, with the notation of the direct implication in Theorem \ref{mainth}, a limit periodic set $\Gamma$ must be a finite union $\bigcup_{i=1}^{m} S_i  \cup \bigcup_{j=1}^{n} \gamma_j$, for some $m,\,n\in\mathbb{N}$, where each $S_i$ is a connected semi-algebraic subsets of the set of singularities of $\hat{X}_{\lambda_0}$ and each $\gamma_j$ is a regular orbit of $\hat{X}_{\lambda_0}$ which converge to a singular points in $\bigcup_{i=1}^m S_i$. Even more, each $\gamma_j$ is characteristic in both extremes; that is, when the orbit is run in either negative or positive time, the orbit converges in a well-defined direction to a singular point of $\hat{X}_{\lambda_0}$ and, in a sufficiently small neighbourhood of that limit point, $\gamma_j$ is the frontier of a parabolic or hyperbolic sector.

\end{itemize}

\medskip
\noindent {\bf Acknowledgements.} We would like to thank the anonymous referee for the useful comments and the University of Toronto for its hospitality. The work of the first author is supported by LabEx CIMI. The second author is supported by Fundaci\'on S\'eneca through the program ``Contratos Predoctorales de Formaci\'{o}n del Personal Investigador'', grant 18910/FPI/13 and by the MINECO grants MTM2014-52920-P.

\section{Topology of limit periodic Sets}\label{directproof}

Along the Section, the reader is assumed to be familiar with some background in elementary Planar Qualitative Theory of Differential Equations; regarding this, \cite{dla06} is a good reference.

In Subsection~\ref{bendixson}, we recall the constructions of the Bendixson compactification of a polynomial vector field on $\R^2$. Subsection~\ref{semialg} is devoted to present the notion of real semialgebraic sets and some of their elementary properties and, finally, in Subsection~\ref{directProof}, we prove the direct part of Theorem~\ref{mainth}

\subsection{Bendixson Compactification}\label{bendixson} 
For the sake of simplicity, we present an adaptation of \cite[Section~1.1.3.2]{r98} using real analysis notation. 

The one-point compactification of the euclidean plane $\RR_{\infty}=\RR \cup \{\infty\}$ can be seen as a real analytic compact surface. Indeed, it is enough to consider the two local charts $(\R^2,z)$ and $(\R^2_{\infty}\setminus \{0\},Z)$ where $z:\RR \to \RR$ is the map given by the formula $z(x,y)=(r(x,y),s(x,y))=(x,y)$ for every $(x,y) \in \RR$ and $Z:\RR_{\infty} \setminus \{0\} \to \RR$ is given by $Z(x,y)=(u(x,y),v(x,y))=(x/(x^2 +y^2),-y/(x^2 +y^2))$ if $(x,y) \neq \infty$ and $Z(\infty)=(u(\infty),v(\infty))=0$. The equations for the changes of coordinates $z \circ Z^{-1}: \RR \setminus \{0\} \to \RR \setminus \{0\}$ and $Z \circ z^{-1}: \RR \setminus \{0\} \to \RR \setminus \{0\}$ are given by the analytic formulas $r = u/(u^2 + v^2)$ and $s = -v/(u^2 + v^2)$ and  $u = r/(r^2 + s^2)$ and $v = -s/(r^2 + s^2)$ respectively; this justifies that $\{(\R^2,z),(\R^2_{\infty}\setminus \{0\},Z)\}$ is an analytic atlas for $\RR_{\infty}$. {We denote by $\phi: \RR_{\infty}\to \RR_{\infty}$ the homeomorphism associated to this transition map (where $\phi(0)=\infty$ and $\phi(\infty)=0$; ; we will call $\phi$ the \textit{transition homeomorphism associated to the Bendixson compactification}.

We will also refer to $\RR_{\infty}$ as $\EE$ and we shall call it the Bendixson compactification of $\RR$. This notation is justify by the fact that $\RR_{\infty}$ is analytically diffeomorphic to the standard euclidean unit sphere $\mathbb{S}^2=\{(x,y,z) \in \R^3 : x^2 + y^2 + z^2 =1\}$: as a explicit analytic diffeomorphism we may consider the map $\psi:\mathbb{S}^2 \to \RR_{\infty}$ given by the formulas $\psi(0,0,1)=\infty$ and $\psi(x,y,z)=(x/(1 - z),y/(1 - z))$ if $(x,y,z) \neq (0,0,1)$. If we denote by $d_2$ the standard euclidean distance on $\R^3$, it follows that $\RR_{\infty}$, with {its} natural topology as the one-point compactification of $\RR$, is a metrizable space; and as a compatible distance we can take the one given by $d(a,b)=d_2(\psi^{-1}(a), \psi^{-1}(b))$ for every $a,b \in \RR_{\infty}$.

Let $P$ and $Q$ be real polynomials in two variables and consider the algebraic planar vector field given by $X=P  \partial_x + Q \partial_y$. If $d=\max\{\deg(P),\deg(Q)\}$, we may consider a vector field in the amplified euclidean plane $\RR_{\infty}$, $\hat{X}$, given by the formulas $\hat{X}(r,s)=\frac{1}{1 + (r^2 + s^2)^{d}} \left(  P(r,s) \partial_r  + Q(r,s) \partial_s \right) $ if $(r,s) \in \RR_{\infty} \setminus \{\infty\}$ and $\hat{X}(\infty)=0$. It is direct to show that $\hat{X}$ is well-defined and analytic in the whole $\RR_{\infty}$.

\subsection{Semialgebraic sets}\label{semialg}
Let $x=(x_1, \ldots, x_n)$ be a coordinate system of $\mathbb{R}^n$. Given any polynomial $f$ on $\R^n$ we shall say that $(f(x)=0)=\{x \in \R^n : \, f(x) =0\}$ is a \textit{algebraic set}. A more general concept is the following one. 

\begin{definition}(See \cite[Section 1]{bm88}). A subset $Z \subset \mathbb{R}^n$ is \emph{semialgebraic} if there exist polynomials $f_{i}$ and $g_{ij}$ on $\R^n$, $i=1,\ldots, p$ and $j=1, \ldots, q$, such that
\begin{equation*}\label{defsemalg}
Z= \bigcup_{i=1}^p \bigcap_{j=1}^q \{ x \in \R^n : \, f_{i}(x)=0 \text{ and } g_{ij}(x)>0\}.
\end{equation*}

A set $Z \subset \mathbb{S}^2 \subset \mathbb{R}^3$ is said to be semialgebraic if $Z$ is a semialgebraic set of $\mathbb{R}^3$.
\end{definition}

A fist collection of examples of semialgebraic sets is given by the finite unions of arcs linear by parts. For every $a,b \in \R^2$, we will denote $[{a},{b}]=\{{a} + s {b} : 0 \leq s \leq 1\}$, the straight arc joining ${a}$ and ${b}$. Let ${a}_1, \ldots,{a}_n$ be points in $\R^2$ and call $l_j=[{a}_j,{a}_{j+1}]$ for any $1 \leq j \leq n-1$. If $l_j \cap l_{j'} = \emptyset$ when $\left|j-j'\right|\neq 1$ and $l_j \cap l_{j+1} = \{{a}_{j+1}\}$ for any $1 \leq j \leq n-1$, we say that $L=\cup_{j=1}^{n-1}{l_j}$ is an \textit{arc linear by parts}. The points ${a}_1$ and ${a}_n$ are said to be the \textit{endpoints} of $L$. 

We next present a family of subsets of $\EE$ which are topologically equivalent to semialgebraic sets. 

Given any positive integer $n \in \N$, we say that a topological space is an \textit{$n$-star} if it is homeomorphic to $S_n=\{z \in \mathbb{C}: z^{n} \in [0,1]\}$. If $Z$ is an $n$-star and $h:S_n \to X$ is a homeomorphism, then the image of the origin under $h$ is called a \textit{vertex of the star} while the components of $Z \setminus \{h(0)\}$ are called the \textit{branches of the star}. Note that the vertex and the branches of a star are uniquely defined except in the cases $n=1,2$, when $Z$ is just a closed arc and the vertexes are its endpoints (for $n=1$) or its interior points (for $n=2$). We shall also adopt the convention of calling any singleton a $0$-star (the point being its vertex). When $Y$ is a topological space and ${a}$ is a point in $Y$ which posses a neighbourhood $Z \subset Y$ being an \emph{$n$-star} with ${a}$ as vertex, we will say that ${a}$ is a \textit{star point in $Y$ (of order $n$)}; if all the points in $Y$ are star points, we will say that $Y$ is a \textit{generalized graph}.

An important family of examples of generalized graphs is given by the set of zeros of planar analytic maps. 

\begin{proposition}\label{zeros} Let $f:U \to \R$ be an analytic map in an open subset $U \subset \RR$ and $Z=\{z \in U: f(z)=0\}$ be the set of zeros of $f$. Then either $Z$ is a whole connected component of $U$ or $Z$ is a generalized graph. \end{proposition}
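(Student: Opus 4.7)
The plan is to first reduce to a standard local situation and then appeal to the local structure theorem for real analytic curves. We may assume $U$ is connected, by applying the result componentwise. If $f \equiv 0$ on $U$, then $Z = U$ is the entire connected component, giving the first alternative. Suppose, therefore, that $f \not\equiv 0$ on $U$. We must show that every point $p \in Z$ admits a neighborhood in $Z$ homeomorphic to some star $S_n$ with $p$ as vertex, so that $Z$ is a generalized graph.

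Fix $p \in Z$ and translate so that $p = 0$. The key tool is the Weierstrass Preparation Theorem. After a generic linear change of coordinates we may assume $f(0,y) \not\equiv 0$, so that the germ of $f$ at $0$ is regular in $y$ of some finite order $n \geq 1$ (since $0 \in Z$). Weierstrass Preparation then provides a factorization $f(x,y) = u(x,y)\,P(x,y)$ in a small polydisc $D$, where $u$ is an analytic unit and
\[
P(x,y) = y^n + a_{n-1}(x)y^{n-1} + \cdots + a_0(x), \qquad a_i(0)=0,
\]
is a Weierstrass polynomial in $y$. In particular $Z \cap D = \{P = 0\} \cap D$, reducing the analysis to the real zero set of $P$.

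The next step is to factor $P$ into irreducible factors over the ring $\mathbb{R}\{x\}$ of convergent real power series and apply the real Newton--Puiseux theorem. Each real irreducible factor admits a finite analytic parametrization $t \mapsto (t^{m_i}, \varphi_i(t))$ for $t$ in a small real interval around $0$, whose image is a real analytic arc through the origin; irreducible factors without real zeros other than $0$ contribute only $\{0\}$ to the real zero set. Only finitely many such branches arise. Shrinking $D$ to a smaller ball $V$, the intersection $V \cap Z$ therefore consists of $0$ together with finitely many pairwise disjoint half-arcs converging to $0$, which is exactly a finite star with $0$ as vertex. This proves that $0$ is a star point of $Z$.

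The main obstacle I anticipate is the careful accounting of real branches among the complex irreducible factors of $P$: one must verify that each real irreducible factor contributes one or two half-branches at $0$ depending on the parity of its ramification index, and that branches corresponding to distinct factors meet only at $0$ after possibly further shrinking $V$. An alternative route that avoids this subtlety is to desingularize the curve $\{P=0\}$ by a finite sequence of point blow-ups; the strict transform is then a smooth analytic curve (union of disjoint smooth arcs) meeting the exceptional divisor transversally, and pushing back down yields the same star-shaped local model at $0$.
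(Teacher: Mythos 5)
Your proposal is correct and follows essentially the same route as the paper: the paper's proof is a one-line appeal to the Weierstrass Preparation Theorem and the theory of Puiseux series, deferring the details to \cite[p.~687]{jl07}, and those details are precisely the local branch analysis you spell out (including the parity/half-branch bookkeeping you flag as the main subtlety). No genuinely different idea is involved; you have simply written out the argument the paper cites.
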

\begin{proof} The result follows as a consequence of the Weierstrass Preparation Theorem and the theory of Puiseux Series (for a detailed record of the proof, see, for example,~\cite[p. 687]{jl07}). 
\end{proof}

We focus on a special subfamily of generalized graphs which shall play an important part in the {proof} of Theorem~\ref{mainth}. 

\begin{lemma}\label{top_eq_semi} Let ${L} \subset \mathbb{S}^2$ be a connected generalized graph. Then ${L}$ is topologically equivalent to a semialgebraic set{; that is, there exists a homeomorphism of $\mathbb{S}^2$ onto itself taking $L$ to a semialgebraic set.} \end{lemma}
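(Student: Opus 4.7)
The strategy is to show that $L$ is in fact a finite topological graph (with finitely many vertices and finitely many edges), to realize the same combinatorial graph — with the same cyclic order of edges at each vertex — as a piecewise-linear, hence semialgebraic, subset $L'$ of $\mathbb{S}^2$, and finally to extend a combinatorial homeomorphism $L\to L'$ to a self-homeomorphism of $\mathbb{S}^2$ by invoking the Jordan--Schoenflies theorem on each complementary region.

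\textbf{Step 1 (finiteness of the vertex set).} For each $p\in L$ let $n(p)$ denote the star-order of a star-neighborhood of $p$. Call $p$ \emph{regular} if $n(p)=2$ and \emph{singular} otherwise. If $p$ is singular and $U\ni p$ is an $n(p)$-star neighborhood, then every other point of $U$ lies on the interior of a branch and is therefore regular of order $2$. Hence singular points of $L$ are isolated in $L$. Since the lemma is applied in the paper to closed subsets of $\mathbb{S}^2$ (compactness of $L$), the set $V=\{p\in L:\, n(p)\neq 2\}$ is finite. The connected components of $L\setminus V$ are then finitely many open $1$-manifolds (arcs), each of whose closure in $L$ meets $V$ in one or two points; these play the role of the \emph{edges} of $L$.

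\textbf{Step 2 (semialgebraic realization of the combinatorial structure).} At each vertex $v\in V$ the incident edges acquire a cyclic order from the embedding $L\hookrightarrow\mathbb{S}^2$. Choose points $v'_1,\dots,v'_k\in\mathbb{R}^2\subset\mathbb{S}^2$ with pairwise disjoint small disk neighborhoods, one for each $v_i\in V$, and, inside each such disk, choose $n(v_i)$ outgoing straight segments whose cyclic order around $v'_i$ matches the cyclic order of the branches of $L$ at $v_i$. Outside of the vertex disks, connect the prescribed pairs of segment-ends by pairwise disjoint polygonal arcs (arcs linear by parts), using, if necessary, the chart at $\infty$ to handle an edge incident to the point at infinity. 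The resulting set $L'$ is a finite union of straight segments (and possibly arcs passing through the second Bendixson chart, which are also semialgebraic after the coordinate change $Z\circ z^{-1}$); in either case $L'$ is semialgebraic in $\mathbb{S}^2$, and by construction there is a homeomorphism $\varphi_0\colon L\to L'$ preserving vertices, edges and the cyclic order at each vertex.

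\textbf{Step 3 (extension to $\mathbb{S}^2$).} Because $\varphi_0$ preserves the cyclic order at each vertex, the complementary regions $\mathbb{S}^2\setminus L$ and $\mathbb{S}^2\setminus L'$ are in a canonical bijection $F\leftrightarrow F'$ that matches the boundary walks. Each such $F$ is simply connected: as $L$ is a finite graph in $\mathbb{S}^2$ and both $L$ and $L'$ are connected, each complementary region is an open topological disk by the Jordan--Schoenflies theorem. The restriction of $\varphi_0$ to $\partial F$ is a homeomorphism onto $\partial F'$; by Schoenflies it extends to a homeomorphism $\overline F\to\overline{F'}$. Carrying out the extension on every complementary region and gluing along $L$ produces the desired homeomorphism $\varphi\colon\mathbb{S}^2\to\mathbb{S}^2$ with $\varphi(L)=L'$.

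\textbf{Main obstacle.} The delicate part is Step 3: one must check that the bijection between complementary regions is consistent with the boundary walks and that the individually chosen Schoenflies extensions agree on the common boundary edges. This is handled by first fixing $\varphi_0$ on $L$ (so the boundary data are identical from the two sides of each edge) and then extending region by region. The rest of the argument is bookkeeping on a finite graph, which is why the finiteness established in Step 1 is essential.
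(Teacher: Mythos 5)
Your overall strategy coincides with the paper's: both arguments first establish that $L$ is a finite topological graph (the non-order-$2$ star points are isolated, hence finite by compactness, and there are finitely many edges), then replace $L$ by a piecewise-linear model $L'$ carrying the same combinatorial and embedding data, and finally extend the homeomorphism $L\to L'$ to all of $\mathbb{S}^2$. Your Steps 1 and 2 are essentially the paper's construction; the paper builds $L'$ ``in place'' (it keeps, as straight segments, the branches joining each singular point $a_j$ to the points where $\Gamma$ exits a small ball $B_j$, and only replaces the order-$2$ portions by polygonal arcs with the \emph{same} endpoints), which makes the existence of the disjoint polygonal replacement arcs easier to justify than your abstract realization of the rotation system, but this is a difference of bookkeeping rather than of method.

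The genuine gap is in Step 3. The boundary of a complementary face $F$ of a connected graph in $\mathbb{S}^2$ is in general \emph{not} a simple closed curve: if $L$ has a vertex of order $1$ or a bridge (for instance $L$ a $3$-star, or a circle with a whisker attached), then some face has a boundary walk that traverses certain edges twice, and $\partial F$ is all of $L$ rather than a Jordan curve. The Jordan--Schoenflies theorem therefore does not apply to ``the restriction of $\varphi_0$ to $\partial F$'' and does not yield an extension $\overline F\to\overline{F'}$. What is actually needed is that $\overline F$ is the image of a closed disk under a quotient map identifying boundary arcs according to the boundary walk, and that a homeomorphism matching boundary \emph{walks} (not merely point-set boundaries) extends over that disk; this is precisely the content of the Adkisson--MacLane theorem on extending maps of plane Peano continua, which is what the paper invokes at this point. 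Your ``Main obstacle'' paragraph addresses the compatibility of the extensions across shared edges, but not this more basic issue of extending over a single face with non-Jordan boundary; as written, the step already fails when $L$ is a $3$-star, where $\mathbb{S}^2\setminus L$ is a single face. Replacing the appeal to Schoenflies by a citation of Adkisson--MacLane (or by an explicit disk-with-boundary-identifications argument) repairs the proof and brings it in line with the paper's.
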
 
\begin{proof} Let us start by noticing that, apart from composing a rotation with the {transition} homeomorphism $\phi: \mathbb{R}^2_{\infty} \to \mathbb{R}^2_{\infty}$ {associated to} the Bendixson compactification (see Section~\ref{bendixson}), we may suppose that there exists a compact and connected set $\Gamma \subset \mathbb{R}^2$ such that its completion in $\mathbb{R}^2_{\infty}$ is equal to ${L}$. Also, if we call $T \subset \Gamma$ the subset of points which are not star points of order $2$, then $T$ is {a finite set}. It is then enough to prove the result under the hypothesis of $\Gamma$ being non-empty.

If $T$ is empty, there is nothing to say: ${L}$ is homeomorphic to $\{(x_1,x_2) \in \R^2 : x_1^2 + x_2^2 =0\}$~\cite[Theorem~2, p. 180]{KU}. Otherwise, let us say that $T=\{{a}_1, \ldots, {a}_m\}$ for some $m \geq 1$. For every $1 \leq j \leq m$ we can take a neighbourhood $B_{j} \subset \RR$ of ${a}_j$ such that $B_{j} \cap T = \{{a}_j\}$ and $B_{j} \cap \Gamma$ is an $n_j$-star. Without lost of generality we can also assume that, for every $1 \leq j \leq m$, $B_{j}$ is a standard euclidean compact ball of center ${a}_j$ in $\RR$, that $\partial B_{j}$ meets $\Gamma$ in exactly $n_j$ points ${b}_{j,1}, \ldots, {b}_{j,n_j}$ and, as a consequence, $B_{j} \cap \Gamma$ is homeomorphic to $M_j=\cup_{k=1}^{n_j} [{a}_j,{b}_{j,k}]$. Now any of the components of $\Gamma \setminus \cup_{j=1}^{m} B_j$ is a generalized graph consisting only of star points of order $2$, let us say $U_1, \ldots, U_{\tau}$ are those components. For any $1 \leq k \leq \tau$, we can take an arc linear by parts $N_k$ whose endpoints coincide with the points in $\Cl(U_k) \setminus U_k$ and such that $\cup_{j=1}^{m}M_j \, \cup \,\cup_{k=1}^{\tau} N_k$ is homeomorphic to $\Gamma$. This last homeomorphism can be extended to a homeomorphism from the {sphere} to the {sphere} (see, for example,~\cite[Theorem~1]{Adkisson1940}). 
\end{proof}

\subsection{Topological properties of periodic limit sets}\label{directProof} 

Let $\left\{X_{\lambda}\right\}_{\lambda \in \Lambda}$ be a polynomial family of planar vector fields and, for every $\lambda \in \Lambda$, let $\hat{X}_{\lambda}$ be the analytic vector field on $\EE$ described by the Bendixson compactification as in Section~\ref{bendixson} {(we remark that $p_N=(0,0,1)$ is a singular point for every $\hat{X}_{\lambda}$)}. Together with the family $\{\hat{X}_{\lambda}\}_{\lambda \in \Lambda}$ we may consider the associated analytic flow $\Phi: \R \times \EE \times \Lambda \to \EE$.

The continuity of the flow already gives some topological and dynamical obstructions for the limit periodic sets: a limit periodic set at a parameter $\lambda_0$ must be invariant for $X_{\lambda_0}$ and its compactification by one point must be connected. 

\begin{lemma}\label{InvConn} If $\Gamma$ is a limit periodic set at the parameter $\lambda_0$, then $\hat{\Gamma}$ is connected and invariant for $\hat{X}_{\lambda_0}$ (equivalently, $\Gamma$ is invariant for $X_{\lambda_0}$).
\end{lemma}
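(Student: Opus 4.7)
The plan is to establish both properties directly from the Hausdorff convergence $\gamma_n \to \hat{\Gamma}$ combined with standard properties of the analytic flow $\Phi$.

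For connectedness, I would invoke the classical fact that a Hausdorff limit of a sequence of compact connected subsets of a compact metric space is itself connected. Since each $\gamma_n$ is a topological circle, hence compact and connected, and $\mathbb{S}^2$ is a compact metric space, the limit $\hat{\Gamma}$ is automatically compact and connected.

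For invariance, the idea is to use the joint continuity of the flow $\Phi: \mathbb{R} \times \mathbb{S}^2 \times \Lambda \to \mathbb{S}^2$ (which follows because $\hat{X}_\lambda$ depends analytically on $\lambda$ and every orbit in the compact $\mathbb{S}^2$ is defined for all time). Fix $p \in \hat{\Gamma}$ and $t \in \mathbb{R}$. By Hausdorff convergence, there exists a sequence $p_n \in \gamma_n$ with $p_n \to p$. Since each $\gamma_n$ is a periodic orbit of $\hat{X}_{\lambda_n}$, it is invariant, so $q_n := \Phi(t, p_n, \lambda_n) \in \gamma_n$. By continuity of $\Phi$ combined with $\lambda_n \to \lambda_0$, we have $q_n \to \Phi(t, p, \lambda_0)$. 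Then I apply the elementary lemma that if $A_n \to A$ in the Hausdorff topology and $q_n \in A_n$ with $q_n \to q$, then $q \in A$. This yields $\Phi(t, p, \lambda_0) \in \hat{\Gamma}$, proving invariance of $\hat{\Gamma}$ under $\hat{X}_{\lambda_0}$.

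For the parenthetical equivalence, I would observe that $\hat{\Gamma}$ is either equal to $\Gamma$ (when $\Gamma$ is compact in $\mathbb{R}^2$) or to $\Gamma \cup \{\infty\}$ (when $\Gamma$ is unbounded). Since $\infty = p_N$ is a singular point of $\hat{X}_{\lambda_0}$ by the remark just before the lemma, it is trivially invariant, so invariance of $\hat{\Gamma}$ for $\hat{X}_{\lambda_0}$ is equivalent to invariance of $\Gamma$ for $X_{\lambda_0}$.

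The only subtle point worth flagging is the continuity of $\Phi$ in the parameter $\lambda$ uniformly on compact intervals; this is not an obstacle but a standard ODE fact that should be cited rather than reproved. Everything else is formal Hausdorff-topology bookkeeping, so I do not anticipate any serious difficulty.
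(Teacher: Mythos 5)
Your proposal is correct and follows essentially the same route as the paper: invariance is obtained by pushing approximating points $p_n\in\gamma_n$ through the flow and using joint continuity in $(t,p,\lambda)$ together with Hausdorff convergence, and connectedness is the standard fact about Hausdorff limits of connected compacta (the paper simply proves that fact inline via a two-open-sets separation argument instead of citing it). Your extra remark on the parenthetical equivalence, using that $\infty$ is a singular point of $\hat{X}_{\lambda_0}$, is a harmless elaboration of what the paper leaves implicit.
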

\begin{proof} Let us start fixing a sequence in $\Lambda$ {converging to $\lambda_0$}, $(\lambda_n)_n$, and a sequence of topological circles in $\RR$, $(\gamma_n)_n$, such that $(\hat{\gamma}_n)_n$ converges to $\hat{\Gamma}$ in the Hausdorff topology of $\mathbb{S}^2$ and, for every $n$, $\gamma_n$ is a limit cycle of $X_{\lambda_n}$.

Firstly, we consider points ${a} \in \Gamma$ and ${b} = {\Phi}(s,{a},\lambda_0)$ for some $s \in \R$ and a sequence of points ${a}_n \in \gamma_n$ converging to ${a}$. By the continuity of $X_{\lambda}$, the points ${\Phi}(s,{a}_n,\lambda_n)$ converge to ${b}$ so ${b} \in {\hat{\Gamma}}$. 

Next, to obtain a contradiction, let us suppose that $\hat{\Gamma}$ is not connected and choose two disjoint open sets $V_1$ and $V_2$ of {$\EE$} which disconnect {$\hat{\Gamma}$}. Since $\gamma_n \to \hat{\Gamma}$ in the Hausdorff topology, we conclude that $\gamma_n \subset V_1 \cup V_2$, $\gamma_n \cap V_1 \neq \emptyset$ and $\gamma_n \cap V_2 \neq \emptyset$, for $n$ sufficiently big. But this implies that $\gamma_n$ is disconnected, which is impossible. \end{proof}

From the analyticity of the flow $\Phi$ (we only need to use that it is of class $C^1$), the following important local property is established: any limit periodic set can meet at most once with any traversal. We formalize this property below. 

Let $I \subset \R$ be an open interval and $\lambda \in \Lambda$. An embedding $\sigma: I \to \EE$ of class $C^1$ is called a \textit{transverse section} of $\hat{X}_{\lambda}$ if, for any $s\in I$, the vectors {$\dot{\sigma}(s)$} and {$\hat{X}_{\lambda}(\sigma(s))$} are linearly independent. We shall also refer to {$\sigma(I)$} as a \textit{transverse section} of $\hat{X}_{\lambda}$.  

If ${a} \in \EE$ is a regular point of $\hat{X}_{\lambda_0}$, then we can always find a positive real number $\eps>0$ and an analytic embedding $\sigma: (-\eps,\eps) \to \EE$ being a transverse section of $\hat{X}_{\lambda_0}$ with $\sigma(0)={a}$. On the other hand, given any transverse section of $\hat{X}_{\lambda_0}$, $\sigma: I \to \EE$, it is clear that for any $t \in I$ we can take $I(t)$, an open neighbourhood of $t$ in $I$, and $\Lambda(\lambda_0)$, a neighbourhood of $\lambda_0$ in $\Lambda$, such that the restriction of $\sigma$ to $I(t)$ is a transverse section of $\hat{X}_{\lambda}$ for every $\lambda \in \Lambda(\lambda_0)$. These observations, together with the \textit{Flow Box Theorem} and the fact that any periodic orbit of a $C^1$ vector fields on the sphere can meet any transverse section only once, give the following result.

\begin{lemma}\label{transversal}(see~\cite[Lemma~2, p. 20]{r98}) Let $\Gamma$ be a limit periodic set at the parameter $\lambda_0$. Then any transverse section of $\hat{X}_{\lambda_0}$ meets $\hat{\Gamma}$ at most once.
\end{lemma}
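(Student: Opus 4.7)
My plan is to argue by contradiction. Suppose the transverse section $\sigma: I \to \EE$ of $\hat{X}_{\lambda_0}$ meets $\hat{\Gamma}$ at two distinct points $a=\sigma(t_1)$ and $b=\sigma(t_2)$ with $t_1\neq t_2$. Fix a sequence $(\lambda_n)_n$ converging to $\lambda_0$ and topological circles $(\gamma_n)_n$ such that each $\gamma_n$ is a limit cycle of $X_{\lambda_n}$ and $\hat{\gamma}_n\to \hat{\Gamma}$ in the Hausdorff topology of $\EE$. My goal is to construct, for $n$ sufficiently large, two distinct points of $\hat{\gamma}_n\cap \sigma(I)$, which will contradict the classical fact that a periodic orbit of a $C^1$ vector field on the sphere crosses any transverse section at most once.

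First I use the openness observation stated in the paragraph preceding the lemma: there exist disjoint open subintervals $I_1\ni t_1$ and $I_2\ni t_2$ of $I$, together with a neighbourhood $\Lambda(\lambda_0)$ of $\lambda_0$ in $\Lambda$, such that $\sigma|_{I_j}$ is a transverse section of $\hat{X}_\lambda$ for every $\lambda \in \Lambda(\lambda_0)$ and $j=1,2$. Next, since $a$ and $b$ are regular points of $\hat{X}_{\lambda_0}$ (transversality forces $\hat{X}_{\lambda_0}(a), \hat{X}_{\lambda_0}(b)\neq 0$), the Flow Box Theorem provides disjoint open neighbourhoods $V_j$ of $a$ and $b$ in $\EE$ in which $\sigma(I_j)$ plays the role of a transversal slice; that is, every orbit of $\hat{X}_{\lambda_0}$ whose connected trace in $V_j$ is nonempty must cross $\sigma(I_j)$ exactly once inside $V_j$. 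By continuity of the flow $\Phi$ in the parameter $\lambda$, I may shrink $\Lambda(\lambda_0)$ if necessary so that the same flow-box property holds for every $\hat{X}_\lambda$, $\lambda\in \Lambda(\lambda_0)$.

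For $n$ large enough one has $\lambda_n\in \Lambda(\lambda_0)$ and, by Hausdorff convergence $\hat{\gamma}_n\to \hat{\Gamma}$, the set $\hat{\gamma}_n$ meets both $V_1$ and $V_2$. Since $\gamma_n\subset \RR$ is bounded, $\hat{\gamma}_n=\gamma_n$ is a periodic orbit of $\hat{X}_{\lambda_n}$; hence its passage through $V_j$ must cross $\sigma(I_j)$. The disjointness of $I_1$ and $I_2$ together with the injectivity of $\sigma$ then yields two distinct points in $\hat{\gamma}_n\cap \sigma(I)$, contradicting the aforementioned property of periodic orbits.

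The only nontrivial point in the plan is ensuring that Hausdorff proximity of $\hat{\gamma}_n$ to the points $a,b$ actually forces $\hat{\gamma}_n$ to \emph{cross} the transverse slices $\sigma(I_j)$ and not merely to graze their neighbourhoods; this is handled by the flow-box construction, which is uniform in $\lambda$ near $\lambda_0$ because $\hat{X}_\lambda$ depends continuously on $\lambda$ and remains transverse to $\sigma|_{I_j}$ on $\Lambda(\lambda_0)$.
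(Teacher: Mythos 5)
Your overall strategy is exactly the one the paper (following Roussarie) has in mind: persistence of transversality in the parameter, the Flow Box Theorem to convert Hausdorff proximity of $\hat{\gamma}_n$ to the points $a,b$ into actual crossings of the section, and the classical fact that a periodic orbit meets a transverse section at most once. All of that part of your argument is sound.

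There is, however, one step at the very end that does not close as written. The fact you invoke --- a periodic orbit of a $C^1$ vector field on the sphere meets a transverse section at most once --- applies to a single \emph{connected} transverse arc of the vector field $\hat{X}_{\lambda_n}$ itself. You have only arranged for $\sigma$ to be transverse to $\hat{X}_{\lambda_n}$ on the two disjoint subintervals $I_1$ and $I_2$; their union is not a transverse section, and a periodic orbit can perfectly well meet two disjoint transversal arcs once each (picture two short segments crossing a circle at two different points). Producing one point of $\hat{\gamma}_n$ in $\sigma(I_1)$ and another in $\sigma(I_2)$ is therefore not yet a contradiction. Appealing to ``the aforementioned property'' for $\sigma(I)$ is not legitimate either, since $\sigma(I)$ is a transverse section of $\hat{X}_{\lambda_0}$, not of $\hat{X}_{\lambda_n}$, and $I$ need not be compact. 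The repair is short: restrict $\sigma$ to the compact subinterval $J=[t_1,t_2]\subset I$; by the covering argument already stated in the paragraph preceding the lemma (finitely many of the neighbourhoods $I(t)$, $t\in J$, cover $J$, and one intersects the corresponding parameter neighbourhoods), $\sigma|_J$ is a \emph{connected} transverse section of $\hat{X}_{\lambda}$ for all $\lambda$ in a neighbourhood of $\lambda_0$. Your two crossing points then lie on this single transverse section of $\hat{X}_{\lambda_n}$, and the contradiction is genuine.
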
 

The last ingredient we need is the well-known behaviour of analytic vector fields on the neighbourhood of isolated singular points, the so-called \textit{finite sectorial decomposition property} (see~\cite[pp. 17--19]{dla06}): every sufficiently small neighbourhood of an isolated singular point of a planar analytic vector field is either a center, a focus or a finite union of hyperbolic, parabolic and elliptic sectors.

We are now ready to prove the direct implication of Theorem~\ref{mainth}. The work is done by the combination of Lemma~\ref{top_eq_semi} and the following result.

\begin{lemma}\label{final_lemma} {Suppose that} $\Gamma$ is a limit periodic set at the parameter $\lambda_0$ {and that $X_{\lambda_0}\not\equiv 0$. T}hen $\hat{\Gamma}$ is a connected generalized graph.
\end{lemma}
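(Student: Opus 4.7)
The plan is to prove that every $p\in\hat{\Gamma}$ is a star point in $\hat{\Gamma}$; combined with the connectedness established in Lemma~\ref{InvConn}, this will show that $\hat{\Gamma}$ is a connected generalized graph. I would distinguish between $p$ being regular or singular for $\hat{X}_{\lambda_0}$.

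\emph{Regular case.} If $\hat{X}_{\lambda_0}(p)\neq 0$, the Flow Box Theorem produces analytic coordinates $(u,v)$ on a box $B=(-\eps,\eps)^2$ centered at $p$ in which $\hat{X}_{\lambda_0}=\partial_u$. Each vertical slice $\{u_0\}\times(-\eps,\eps)$ is transverse to $\hat{X}_{\lambda_0}$ and, by Lemma~\ref{transversal}, meets $\hat{\Gamma}$ in at most one point; invariance (Lemma~\ref{InvConn}) forces the horizontal orbit through $p$ to lie in $\hat{\Gamma}$. Hence $\hat{\Gamma}\cap B$ equals exactly this orbit arc, a $2$-star.

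\emph{Singular case.} If $\hat{X}_{\lambda_0}(p)=0$, write $\hat{X}_{\lambda_0}=P\partial_x+Q\partial_y$ in analytic coordinates and factor $\hat{X}_{\lambda_0}=D\tilde{X}$, where $D=\gcd(P,Q)$ in the local analytic ring and $\tilde{X}=P_1\partial_x+Q_1\partial_y$ has only isolated singularities. The singular set of $\hat{X}_{\lambda_0}$ near $p$ equals $\{D=0\}\cup\{P_1=Q_1=0\}$; since $X_{\lambda_0}\not\equiv 0$ forces $D\not\equiv 0$ on any neighborhood, Proposition~\ref{zeros} gives that $\{D=0\}$ is locally a generalized graph, making $p$ a $k$-star point of $\{D=0\}$ for some $k\geq 0$. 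Away from $\{D=0\}$, the orbits of $\hat{X}_{\lambda_0}$ and $\tilde{X}$ coincide up to reparametrization, so the finite sectorial decomposition of $\tilde{X}$ at $p$ (or the Flow Box for $\tilde{X}$ when $\tilde{X}(p)\neq 0$) governs the local orbit structure. Any transverse section of $\tilde{X}$ inside one of its sectors avoids $\{D=0\}$ and is therefore transverse to $\hat{X}_{\lambda_0}$; Lemma~\ref{transversal} then allows at most one orbit of $\tilde{X}$ per sector to belong to $\hat{\Gamma}$, ruling out spiralling (focus) or nested-loop (center) behavior. Hence only finitely many orbit arcs of $\hat{X}_{\lambda_0}$ meet a small neighborhood of $p$ inside $\hat{\Gamma}$. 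Shrinking $B$ to discard every such arc whose closure does not contain $p$ (these are compact and lie at positive distance from $p$), we obtain
\[
\hat{\Gamma}\cap B=(\{D=0\}\cap B)\cup L_1\cup\cdots\cup L_r,
\]
where each $L_i$ is an orbit arc of $\hat{X}_{\lambda_0}$ with endpoint $p$. This union is an $n$-star with vertex $p$.

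\emph{Main obstacle.} The delicate step is the non-isolated singular case $p\in\{D=0\}$, where one must simultaneously control (i) the analytic branching of $\{D=0\}$ at $p$, (ii) the characteristic orbits of $\tilde{X}$ at $p$, and (iii) orbits of $\hat{\Gamma}$ that pass near $p$ but terminate at other points of $\{D=0\}$. Proposition~\ref{zeros} and the finite sectorial decomposition of $\tilde{X}$ resolve (i) and (ii); (iii) is absorbed by shrinking $B$; the transversality Lemma~\ref{transversal} glues the three ingredients together.
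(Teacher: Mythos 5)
Your overall strategy --- flow box plus Lemma~\ref{transversal} at regular points, and at singular points the factorization $\hat{X}_{\lambda_0}=D\tilde{X}$ with $\tilde{X}$ having isolated singularities, Proposition~\ref{zeros} for $\{D=0\}$, and the finite sectorial decomposition of $\tilde{X}$ controlled by Lemma~\ref{transversal} --- is exactly the paper's (your $D$ and $\tilde X$ are the paper's $f$ and $Y$). The regular case is fine. The gaps are concentrated in the non-isolated singular case.

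First, the assertion that ``any transverse section of $\tilde{X}$ inside one of its sectors avoids $\{D=0\}$'' is unjustified and false in general: a branch of $\{D=0\}$ that is \emph{not} $\tilde{X}$-invariant can cross the interior of a sector, and there the orbits of $\hat{X}_{\lambda_0}$ and of $\tilde{X}$ do \emph{not} coincide --- each $\tilde{X}$-orbit is cut by the singular curve into several $\hat{X}_{\lambda_0}$-orbits, so counting ``one $\tilde{X}$-orbit per sector'' does not bound the number of $\hat{X}_{\lambda_0}$-orbits of $\hat{\Gamma}$ near $p$. What is needed, and what the paper uses (citing \cite[Lemma~3.2]{jp09}), is that after shrinking the neighbourhood each branch of $\{D=0\}$ is either invariant by $\tilde{X}$ or a transverse section of $\tilde{X}$; the invariant branches are then adjoined to the characteristic orbits, and the sector-by-sector count is carried out relative to this enlarged set of separatrices. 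Second, when $\tilde{X}$ has a center or a focus at $p$ there is no sectorial decomposition at all, so ``ruling out spiralling or nested-loop behavior'' needs its own argument: one exhibits, in each connected component of $B\setminus\{D=0\}$, a transverse section accumulating both at $p$ and at the frontier of $B$, and concludes from Lemma~\ref{transversal} that $\hat{\Gamma}\cap B\subset\{D=0\}$ in that component. Finally, your displayed identity $\hat{\Gamma}\cap B=(\{D=0\}\cap B)\cup L_1\cup\cdots\cup L_r$ is too strong: since every point of $\{D=0\}$ is a whole (singular) orbit, invariance forces none of these branches into $\hat{\Gamma}$, and the correct conclusion is only that $\hat{\Gamma}\cap B$ is the union of $\{p\}$ with \emph{some} branches of $\{D=0\}$ and finitely many orbit arcs --- which is still a star, so this last point is cosmetic, unlike the first two.
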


\begin{proof}
According with Lemma~\ref{InvConn}, $\hat{\Gamma}$ is a connected subset of $\mathbb{S}^2$ which is a union of orbits of $\hat{X}_{\lambda_0}$. Therefore, we only need to prove that all the points of $\hat{\Gamma}$ are star points. We fix a point $a \in \hat{\Gamma}$ and we distinguish three cases.

If $a$ is a regular point of $\hat{X}_{\lambda_0}$, the \textit{Flow Box Theorem} and Lemma~\ref{transversal} imply the existence of a  neighbourhood of $a$, $U_a$, such that $\hat{\Gamma} \cap U_a $ is a $2$-star. 

Let us now assume that $a$ is an isolated singular point of $\hat{X}_{\lambda_0}$ and let $U_a$ be a neighbourhood of $a$ such that every point in $U_a \setminus \{a\}$ is a regular point of $\hat{X}_{\lambda_0}$. If $a$ is a center (respectively a node or a focus) for $\hat{X}_{\lambda_0}$, we can always find a transverse section accumulating at $a$ and meeting at least once (respectively twice) any regular orbits of $\hat{X}_{\lambda_0}$ in $U_a$ so Lemma~\ref{transversal} guarantees that, after shrinking $U_a$ if necessary, $\hat{\Gamma} \cap U_a=\{a\}$. Otherwise, we can consider characteristics orbits $c_0, \ldots, c_{n-1}$, with $n \geq 2$, defining a sectorial decomposition around $a$ (we follow the notation of~\cite[pp. 17--19]{dla06}). Using once again Lemma~\ref{transversal}, we note that: at each parabolic sector there may exist only one regular orbit contained in $\hat{\Gamma}$; at each hyperbolic sector, apart from shrinking $U_a$, the intersection with $\hat{\Gamma}$ can only be the characteristic orbits $c_j$ defining this sector; at each elliptic sector, apart from shrinking $U_a$ and adding two new parabolic sectors, we may suppose that the intersection of the elliptic sector with $\hat{\Gamma}$ is empty. It follows from these observations that, also in this case, $\hat{\Gamma} \cap U_a$ is a star of vertex $a$. 

Finally, if $a$ is a non-isolated singularity of $\hat{X}_{\lambda_0}$, it is well known that there exist a neighbourhood of $a$, $U_a$, an analytic map $f:U_a \to \R$ and an analytic vector field $Y$ on $U_a$ such that the restriction of $\hat{X}_{\lambda_0}$ to $U_a$ coincides with the product $f \, Y$ and the vector field $Y$ has no zeros in $U_a \setminus \{a\}$ (e. g. see \cite[Theorem~4.5]{jl07}). 

Let us denote by $Z$ the analytic set $f^{-1}(0)$ and note that, after shrinking $U_a$ if necessary, $Z$ is a star (with $a$ as vertex) decomposing $U_a$ into finitely many connected components any of which contains no singular points for $\hat{X}_{\lambda_0}$. Furthermore, by analyticity, there is no loss of generality in assuming that the neighbourhood $U_a$ has been chosen such that each branch of $Z$ is either invariant by $Y$ or a transverse section of $Y$ ({see for example \cite[Lemma~3.2]{jp09}}). {Accordingly, $\hat{\Gamma} \cap U_a$ is the union of $\{a\}$ with some of the branches of $Z$ and some regular orbits of $Y$.}

The above observation allow us to adapt the argument given in the first two cases, mutatis mutandis, to the case when $a$ is a regular point of $Y$, or when $Y$ admits a sectorial decomposition at $a$ (where we are again considering at least two characteristic orbits and among them appear at least all the branches of $Z\setminus \{a\}$ which are invariant by $Y$). We remark that the latter case includes the scenario of $a$ being a node point for $Y$.

Finally, if $a$ is a center or a focus point of $Y$, it is elementary to show that in any of the connected components of $U_a \setminus Z$ there {exists} a transverse section accumulating at $a$ and at the frontier of $U_a$. Consequently, in these two cases, it may be conclude that, after shrinking $U_a$ if necessary, $\hat{\Gamma} \cap U_a \subset Z$ and $a$ is a star point.  
\end{proof}

\begin{proof}[Proof of direct implication of Theorem \ref{mainth}]
{If $X_{\lambda_0} \not\equiv 0$, the result easily follows from Lemmas \ref{top_eq_semi} and \ref{final_lemma}. So, assume that $X_{\lambda_0} \equiv 0$.}

{The following argument is due to Roussarie \cite[Section 3]{r89} and follows an original idea of Bautin. Let $\Gamma$ be the limit periodic set and
\[
X_{\lambda}(x) = \sum_{\alpha \in \mathbb{N}^2} f_{1,\alpha}(\lambda) \,x^{\alpha}\, \partial_{x_1} +f_{2,\alpha}(\lambda)\, x^{\alpha} \,\partial_{x_2} 
\]
We consider the ideal sheaf $\mathcal{J}$ generated by $(f_{1,\alpha}(\lambda),f_{2,\alpha}(\lambda))_{\alpha \in \mathbb{N}^2}$. Note that $\lambda_0$ is in the support of this ideal by hypothesis.}

{Let $(\lambda_n)$ be the sequence of parameters converging to $\lambda_0$ and note that $\lambda_n$ is not contained in the support of $\mathcal{J}$ (because all fibres which belongs to the support of $\mathcal{J}$ are zero). Consider the monomialization $\sigma : \widetilde{\Lambda} \to \Lambda$ of the ideal sheaf $\mathcal{J}$ (see, e.g. \cite{bm08}) and denote by $(\widetilde{\lambda}_{n})$ the pre-image of $(\lambda_n)$ (which is well-defined because $(\lambda_n)$ is not in the support of $\mathcal{J}$). Since $\sigma$ is a proper map, there exists a subsequence $(\widetilde{\lambda}_{n_k})$ which converges to a parameter $\widetilde{\lambda}_0 \in \sigma^{-1}(\lambda_0)$. By construction, moreover, in a neighbourhood $U$ of $\widetilde{\lambda}_0$ there exists a multi-index $\beta \in \mathbb{N}^n$ such that $\sigma^{\ast}(X_{\lambda})|_{\mathbb{R}^2 \times U} = \widetilde{\lambda}^{\beta} \widetilde{X}_{\widetilde{\lambda}}$, where $\widetilde{X}_{\widetilde{\lambda}_0} \not\equiv 0$. We note that $\Gamma$ is the limit periodic set of $\sigma^{\ast}(X_{\lambda})$ for the parameter $\widetilde{\lambda}_0$, and that the division of the locally defined family by the monomial $\widetilde{\lambda}^{\beta}$ won't change the topology of $\Gamma$. The result follows from the first part of the proof.}

\end{proof}

\section{Construction of limit periodic sets}\label{converseproof}

\subsection{Properties of semialgebraic sets}\label{properties_semi}

We are interested in planar semialgebraic sets $\Gamma \subset \R^2$ {of dimension $0$ or $1$}. Associated to any of these sets, we introduce a free-square polynomial whose set of zeros shall play an important role in the rest of the paper. 

Let us start fixing a coordinate system for the plane $x=(x_1,x_2)$ {and let $\Gamma \subset \R^2$ be a semialgebraic set {of dimension $0$ or $1$} and whose compactification $\hat{\Gamma}$ is connected}. If $\Gamma$ is itself an algebraic set we simply take a free-squared polynomial $f_\Gamma$ making $(f_\Gamma(x)=0)=\Gamma$. Assume now that $\Gamma$ is not an algebraic set; in particular, and because $\hat{\Gamma}$ is connected, we note that none of the components of $\Gamma$ can be singletons. Let $f_{i}$ and $g_{i,j}$, $1 \leq i \leq p$ and $1 \leq j \leq q$, be polynomials such that 
\begin{equation}\label{defadhoc} \Gamma= \bigcup_{i=1}^p \bigcap_{j=1}^q \{ x \in \R^2 : \, f_{i}(x)=0 \text{ and } g_{ij}(x)>0\}. \end{equation} 

Without lost of generality, we can assume that all the $f_i$ are irreducible and also, because $\Gamma$ has empty interior, that all of them are non-constant and $(f_i(x)=0) \cap \Gamma$ is one dimensional. Using the well-known fact that any two co-prime polynomials on $\R^2$ can meet only finitely many times, it is not difficult to reason that under such conditions the polynomials $f_i$ in~\eqref{defadhoc} are uniquely defined (up to the multiplication of non-zero constants). Let us take $f_\Gamma$ as the free-square polynomial associated to the product $\prod_{i=1}^p f_{i}$; this polynomial verifies $\Gamma \subset (f_\Gamma(x)=0)$ and is uniquely defined from~\eqref{defadhoc} in the terms just expressed. In any of the two cases discussed above, we shall refer to the polynomial $f_{\Gamma}$ as \textit{the polynomial associated to $\Gamma$}. The set of zeros of $f_\Gamma$, which we shall denote by $A_\Gamma=(f_\Gamma(x)=0)$, will be also said to be \textit{the algebraic set associated to $\Gamma$}.

\begin{definition}\label{def:specialPoints} Let $\Gamma \subset \mathbb{R}^2$ be a semialgebraic set {of dimension $0$ or $1$} and such that $\hat{\Gamma}$ is connected and let $f_{\Gamma}$ and $A_\Gamma$ be its associated polynomial and algebraic set respectively. A point $a \in \Gamma$ is said to be:

(1) an \emph{algebraic} point of $\Gamma$ if there exists a neighbourhood of $a$ in $\RR$, $U$, such that $U\cap \Gamma = U \cap A_\Gamma$. We denote the set of algebraic points of $\Gamma$ by $Alg(\Gamma)$;

(2) a \textit{generic non-algebraic} point of $\Gamma$ if $a \notin Alg(\Gamma)$ and $A_{\Gamma}$ is regular at $a$ (i. e. the gradient of $f_{\Gamma}$ at $a$ is non-zero). We denote the set of generic non-algebraic points of $\Gamma$ by $Gen(\Gamma)$;

(3) a \textit{non-generic non-algebraic} point of $\Gamma$ if $a \notin Alg(\Gamma)$ and $A_{\Gamma}$ is singular at $a$ (i. e. the gradient of $f_{\Gamma}$ vanishes at $a$). We denote the set of non-generic non-algebraic point of $\Gamma$ by $NG(\Gamma)$. \end{definition}

\begin{remark}\label{rk:PointsZ1} The sets of non-algebraic points $Gen(\Gamma)$ and $NG(\Gamma)$ are both finite.\end{remark}

\begin{remark}\label{rk:PointsZ2} Let us assume that $NG(\Gamma)$ is non-empty, say $NG(\Gamma) = \{a_1, \ldots, a_r\}$ for some positive integer $r$. For every $k \in \{1,\ldots,r\}$, take a sufficiently small euclidean ball $B_k=B(a_k,\rho_k)$ centered at $a_k$ with radius $\rho_k>0$ and denote by $n_k$ the number of connected components of $\left(A_\Gamma \setminus \Gamma\right) \cap B_k$ (the number $n_k$ is the same for every sufficiently small $\rho_k>0$). By \textit{Newton-Puisseux Theorem}, for every $k \in \{1, \ldots, r\}$, there exist sequences of points $(a_i^{j,k})_{i \in \mathbb{N}} \subset A_\Gamma \setminus \Gamma$, $j \in\{1, \ldots, n_k\}$, such that each sequence is contained in a different connected component of $\left(A_\Gamma \setminus \Gamma \right) \cap B_k$ and $a_i^{j,k} \to a_j$ when $i\to \infty$. 

The following objects are used in Section \ref{nongenericcase}: the number $n_{\Gamma}=\sum_{k=1}^r n_k$; the sequence of points in $\mathbb{R}^{2 n_{\Gamma}}$, $(\alpha_i)_{i\in \mathbb{N}}$, given by $\alpha_i = (a_i^{1,1},\ldots,a_i^{n_1,1}, \allowbreak \ldots \allowbreak{, } a_i^{1,r}, \ldots, a_i^{n_r,r})$; and the limit of $(\alpha_i)_i$, $\alpha_0 \in\mathbb{R}^{2 n_{\Gamma}}$. 
\end{remark}

\begin{remark}\label{rk:PointsZ3} It follows from Remark \ref{rk:PointsZ2} that there exists a sequence of semialgebraic sets {of dimension $0$ or $1$} $(\Gamma_i)_{i\in \mathbb{N}}$ such that $\Gamma \subset \Gamma_i \subset A_{\Gamma}$, $NG(\Gamma_i)=\emptyset$ and $\Gamma_i \to \Gamma$ (in the Hausdorff topology) when $i \to \infty$. Moreover, the polynomials $f_{\Gamma}$ and the algebraic set $A_{\Gamma}$ are also the polynomial and the algebraic set associated to any of those $\Gamma_i$ and $Gen(\Gamma_i)=Gen(\Gamma) \cup \{a_i^{1,1},\ldots,a_i^{n_1,1}, \allowbreak \ldots \allowbreak{, }  a_i^{1,r}, \ldots, a_i^{n_r,r}\}$.
\end{remark}

Now assume that $\Gamma$ is compact and connected. There exists a finite number of (non-unique) points $b_1,\ldots, b_{k} \in Alg(\Gamma)$ which are regular points of the algebraic set $A_\Gamma$ and such that both $\Gamma \setminus \{b_1, \ldots, b_k\}$ and $\mathbb{R}^2 \setminus (\Gamma\setminus \{b_1, \ldots, b_k\})$ are connected. We can always fix a certain number of these points, which we call \textit{transition points}, and denote their set by $Tr(\Gamma)$. We remark that the minimal number $k$ of transition points corresponds to the number of connected components of $\mathbb{R}^2\setminus \Gamma$ minus one. Moreover, with the notation of Remark~\ref{rk:PointsZ3}, the set $Tr(\Gamma)$ is a valid set of transition points for $\Gamma_i$, for all $i$ sufficiently big. 

\subsection{Construction of generic compact limit periodic sets}\label{genericcase}

Let us fix a connected and compact semialgebraic set $\Gamma \subset \mathbb{R}^2$ {of dimension $0$ or $1$} and such that $NG(\Gamma) = \emptyset$. Let $f=f_{\Gamma}$ be the polynomial associated to $\Gamma$ and $A_{\Gamma}$ its set of zeros and fix a transition set for $\Gamma$, $Tr(\Gamma)$. Fix a coordinate system $x= (x_1,x_2)$ of $\mathbb{R}^2$ and a parameter $\lambda \in\mathbb{R}$. Denote by $S$ the set $Gen(\Gamma) \cup Tr(\Gamma)$, which is a finite set. 

We consider the function
\[
\begin{aligned}
h(x,\lambda)&= f(x)^2 - \lambda \prod_{p \in S} \left( \|x-p\|^2 - \lambda^2\right),
\end{aligned}
\] where $\| \cdot \|$ stands for the euclidean norm on $\RR$, and the polynomial family of planar vector fields $(X_{\lambda})_{\lambda \in \R}$ where
\begin{equation}\label{genVF} X_{\lambda} =\left( \frac{\partial h}{\partial x_2} + h \frac{\partial h}{\partial x_1}\right)  \partial_{x_1} + \left(-\frac{\partial h}{\partial x_1}  + h \frac{\partial h}{\partial x_2} \right) \partial_{x_2}. \end{equation}

We devote the rest of the Section to show $\Gamma$ is a limit periodic set of $(X_\lambda)_{\lambda \in \R}$ at $\lambda=0$. The key to achieve it is to understand how the level curves (in respect to the parameter $\lambda$) of $h$ are. We shall start giving a local description of $(h(x,\lambda)=0)$ in a neighbourhood of a point $(a,0)$ where $a \in \Gamma$; we treat separately the cases $a \in Alg(\Gamma) \setminus Tr(\Gamma)$ {(Lemma \ref{lem:1}) and $a \in Gen(\Gamma) \cup Tr(\Gamma)$ (Lemma \ref{lem:2}).

Here and subsequently, given any set $A \subset \R^2 \times \R$ and any $t \in \R$ we will denote $A \cap (\lambda=t) =\{(x,\lambda) \in A : \lambda=t\}$; when convenient, we will also understand that $A \cap (\lambda=t)$ is identified with $\{x \in \R^2 : (x,t) \in A\}$. In particular, $Z_{t}$ which stands for the level curve $(h(x,\lambda)=0) \cap (\lambda=t)$ will repeatedly be seen as a subset of $\RR$.

\begin{lemma}\label{lem:1}
For every $a \in Alg(\Gamma) \setminus Tr(\Gamma)$ there exist a number $\epsilon_a>0$ and a compact neighbourhood $V_a$ of $a$ such that $Z_t \cap V_a \subset \RR \setminus \Gamma$ for every $0<t<\epsilon_a$. Moreover, for any connected component $W$ of $V_{a} \setminus \Gamma$ and any $0<t<\epsilon_a$, $Z_t \cap W$ is a non-empty connected regular curve which converges (in the Hausdorff topology) to $\Cl(W) \cap \Gamma$, when $t$ tends to $0$ (see~Figure~\ref{regsing}).
\end{lemma}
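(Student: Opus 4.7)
The plan is to exploit the hypothesis $a\in Alg(\Gamma)\setminus Tr(\Gamma)$ to show that $a\notin S=Gen(\Gamma)\cup Tr(\Gamma)$. Indeed $a\notin Tr(\Gamma)$ is assumed, while $a\in Alg(\Gamma)$ forces $a\notin Gen(\Gamma)$ by Definition~\ref{def:specialPoints} (the three classes $Alg(\Gamma)$, $Gen(\Gamma)$, $NG(\Gamma)$ in that definition are mutually disjoint). So $g(a,0)=\prod_{p\in S}\|a-p\|^2>0$, and by continuity I can choose $\epsilon_0>0$ and a compact neighbourhood $V_a$ of $a$ on which $g(x,\lambda)>0$ for all $(x,\lambda)\in V_a\times[-\epsilon_0,\epsilon_0]$. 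Shrinking $V_a$ further, the condition $a\in Alg(\Gamma)$ provides $V_a\cap\Gamma=V_a\cap A_\Gamma$.

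The first assertion of the lemma follows at once: if $x\in V_a$ and $h(x,t)=0$ for some $0<t<\epsilon_0$, then $f(x)^2=t\,g(x,t)>0$, so $f(x)\neq 0$ and $x\notin A_\Gamma\cap V_a=\Gamma\cap V_a$.

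For the second assertion, I would fix a connected component $W$ of $V_a\setminus\Gamma$. Since $\Gamma\cap V_a=\{f=0\}\cap V_a$, the polynomial $f$ has constant sign on $W$, and I may assume $f>0$; then $Z_t\cap W$ is exactly the locus $\{f(x)=\sqrt{t\,g(x,t)}\}$ in $W$. Hausdorff convergence $Z_t\cap W\to\Cl(W)\cap\Gamma$ is direct: on $Z_t\cap W$ one has $|f|=\sqrt{t g}\leq\sqrt{t\,\max_{V_a\times[0,\epsilon_0]}g}$, pushing these points into an $O(\sqrt t)$-neighbourhood of $\{f=0\}\cap V_a$, while conversely an intermediate-value argument for $f-\sqrt{t g}$ along a path in $W$ joining a point where $f$ is bounded below by a positive constant to one where $f$ is arbitrarily small produces points of $Z_t\cap W$ near any prescribed $y\in\Cl(W)\cap\Gamma$; this simultaneously yields non-emptiness. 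For regularity, I would invoke the Lojasiewicz gradient inequality for the semialgebraic function $f$: on a neighbourhood of $\{f=0\}\cap V_a$ there exist $C>0$ and $\theta\in[0,1)$ with $|\nabla f|\geq C|f|^\theta$. Since $|f|=\sqrt{t g}$ on $Z_t$, this gives $|\nabla f|\geq c_0\,t^{\theta/2}$, and together with $\nabla_x h=2f\nabla f-t\nabla_x g$ it yields $|\nabla_x h|\geq c_1\,t^{(1+\theta)/2}-c_2\,t>0$ for $t$ small enough, so $Z_t\cap W$ is a regular curve.

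The delicate step, which I expect to be the main obstacle, is \emph{connectedness} of $Z_t\cap W$. My plan has two ingredients. First, I would shrink $V_a$ to a ball small enough that $V_a\cap A_\Gamma$ enjoys a Puiseux-type star structure at $a$: a finite number of smooth real-analytic arcs meeting only at $a$. Each connected component $W$ of $V_a\setminus A_\Gamma$ is then a sector, so $\Cl(W)\cap\Gamma$ is connected (it consists of two adjacent arcs joined at $a$, or the entire local curve if $a$ is a regular point of $A_\Gamma$ and $W$ is one of the two half-disks; the degenerate case where $a$ is an isolated point of $A_\Gamma$ yields $W=V_a\setminus\{a\}$ with $\Cl(W)\cap\Gamma=\{a\}$). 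Second, I would apply Hardt's semialgebraic triviality theorem to the semialgebraic family $\{(x,t)\in W\times(0,\epsilon_0):h(x,t)=0\}$ to obtain an $\epsilon_a>0$ over which the topological type of $Z_t\cap W$ is constant for $t\in(0,\epsilon_a)$. Combined with the Hausdorff convergence of $Z_t\cap W$ to the connected set $\Cl(W)\cap\Gamma$, this will force $Z_t\cap W$ to be connected for every $0<t<\epsilon_a$, completing the proof.
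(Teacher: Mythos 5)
Your treatment of the first assertion is correct and matches the paper's starting point: since $a\in Alg(\Gamma)\setminus Tr(\Gamma)$ forces $a\notin S$, one can write $h=f^2-\lambda u$ with $u>0$ near $(a,0)$, and points of $Z_t\cap V_a$ satisfy $f^2=tu>0$. Your non-emptiness and Hausdorff-convergence arguments are workable (though the $O(\sqrt t)$ rate is unjustified without a \L{}ojasiewicz inequality for $f$ itself; plain compactness already gives the one-sided convergence you need), and your regularity argument via the \L{}ojasiewicz gradient inequality is a legitimate alternative to the paper's route, which instead solves $h(z,\lambda)=0$ for $\lambda=\lambda(z)$ by the implicit function theorem and invokes generic regularity of level sets of the analytic function $\lambda(z)$.

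The connectedness step, however, contains a genuine gap, and it is precisely the step you flagged as delicate. Hardt triviality does give you an interval $(0,\epsilon_a)$ over which the fibres $Z_t\cap W$ all have the same number $k$ of connected components, but the final inference --- that Hausdorff convergence of $Z_t\cap W$ to the connected set $\Cl(W)\cap\Gamma$ forces $k=1$ --- is false as a matter of general topology. A family with $k\geq 2$ components can converge in the Hausdorff metric to a connected set: think of two nested ovals, or two parallel arcs, both collapsing onto the same limit curve; their union is disconnected for every $t$ yet its Hausdorff limit is connected. Nothing in your setup excludes, say, a small extra oval of $\{f^2=tg\}$ inside $W$ shrinking onto a point or arc of $\Cl(W)\cap\Gamma$ as $t\to 0$. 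The paper closes exactly this loophole by a different mechanism: assuming $Z_{t_n}\cap W$ disconnected, it uses the curve selection lemma to produce an analytic arc $\phi$ in $\Cl(W)$, ending on $\Gamma$ and separating $W$, which must meet every component of $Z_{t_n}\cap W$; composing with the analytic function $\lambda(z)$ furnished by the implicit function theorem then yields a contradiction with analyticity. Some argument of this kind --- exploiting analyticity (or at least a quantitative separation statement), not just topological triviality plus Hausdorff convergence --- is needed to finish your proof.
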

\begin{proof}
Let us start considering a number $\epsilon_a>0$, a compact neighbourhood $V_a \subset \R^2$ of $a$ and the coordinate system $z= x-a$ (which is centered at $a$)  such that $h(z,\lambda) = f(z)^2 - \lambda u(z,\lambda)$, where $u(z,\lambda) > 0$ at all points in $V_a \times (\epsilon_a,-\epsilon_a)$. 

By the \textit{implicit function theorem}, we may assume that there exists an analytic function $\lambda: V_a \to \R$ such that $h(z,\lambda(z))=0$ for every $z \in V_a$.

We note that the curves $Z_t^a=Z_t \cap V_a$ correspond to the $t$-level curves of $\lambda(z)$, that is, $Z_t^a = (\lambda(z) = t)$. By continuity of $\lambda(z)$, shrinking $V_a$ if necessary, this implies that $Z_t^a$ converges (in the Hausdorff topology) to $\Gamma \cap V_a=(\lambda(z) = 0)$. Furthermore, since the level curves of non-constant analytic functions (restricted to a compact set) are generically regular, we conclude that $Z_t^a$ are regular for all $t>0$ sufficiently small.

Next, since $\lambda(z)\geq 0$ for every $z \in V_a$, we conclude that, for every component $W$ of $V_{a} \setminus \Gamma$, $Z_t^a \cap W$ is non-empty for all small enough $t>0$.

Finally, fix a component $W$ of $V_{a} \setminus \Gamma$ and suppose by contradiction that there exists a sequence $(t_n)_n$ converging to $0$ and such that $Z_{t_n}^a\cap W$ is not connected.  Without loss of generality, we may suppose that $\Cl(W)$ is a compact semialgebraic set. Denote by $\Gamma_W$ the semialgebraic set $\Cl(W) \cap \Gamma$. By the curve selection Lemma (see for example \cite[Lemma~3.1]{M68}), there exists an analytic curve $\phi: [0,1] \to \Cl(W)$ such that $\phi(1)=a \in \Gamma_W$, $\phi(t) \in \Cl(W) \setminus \Gamma_W$ for all $t\neq 0$ and $\Cl(W) \setminus \phi([0,1])$ is not connected. Since all connected components of $Z_t^a$ must converge to $\Gamma_W$, we conclude that the curve $\phi([0,1])$ intersects each of the components of $Z_t^a$. This implies that the function $\lambda \circ \phi$ is constant and equal to $0$ (the value at $\phi(1)$), which is a contradiction.\end{proof}
\begin{figure}
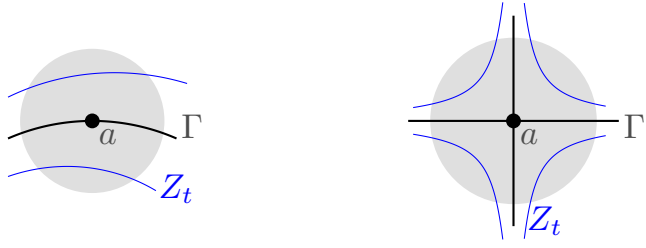


\caption{A regular point (left) and a singular point (right).}
\label{regsing}
\end{figure}

\begin{lemma}\label{lem:2}
For every $a \in Gen(\Gamma) \cup Tr(\Gamma)$, there exist a neighbourhood $V_{a}$ of $a$, a positive $\epsilon_a >0$ and a coordinate system $(y,\lambda)$ defined on $V_{a} \times (-\epsilon_a,\epsilon_a)$ and centered at $(a,0)$ such that $A_\Gamma \cap V_a =(y_1 = 0)$ and
\begin{equation}\label{normalGeneric} h(y,\lambda) =  u(y,\lambda) \left[ y_1^2 - \lambda \left( y_2^2 - \lambda^2 \right) \right] \end{equation} where $u(y,\lambda)$ is a unit over $V_{a} \times (-\epsilon_a,\epsilon_a)$ (see~Figure~\ref{transgen}). 
\end{lemma}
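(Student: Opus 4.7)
The plan is a direct normal-form computation carried out in three coordinate changes: first straighten $A_\Gamma$, then complete the square in the pulled-back squared Euclidean distance, and finally absorb a residual positive unit into the coordinate $y_1$. Since $a\in Gen(\Gamma)\cup Tr(\Gamma)$, $A_\Gamma$ is regular at $a$, so by the implicit function theorem I would choose analytic coordinates $(y_1,y_2)$ on a neighbourhood $V_a$ of $a$, centered at $a$, with $A_\Gamma\cap V_a=\{y_1=0\}$. As $f=f_\Gamma$ is reduced and vanishes on this smooth curve, $f(y)=y_1\,g(y)$ for an analytic unit $g$, and after replacing $y_1$ by $y_1 g$ (valid since $g(0)\neq 0$) we may assume $f(y)=y_1$. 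Peeling off the factor indexed by $a$ from the product,
\[
\prod_{p\in S}\bigl(\|x-p\|^2-\lambda^2\bigr) = \bigl(\rho(y)-\lambda^2\bigr)\,Q(y,\lambda),
\]
where $\rho(y):=\|x(y)-a\|^2$ and $Q(y,\lambda):=\prod_{p\in S\setminus\{a\}}\bigl(\|x(y)-p\|^2-\lambda^2\bigr)$ satisfies $Q(0,0)=\prod_{p\neq a}\|a-p\|^2>0$, hence is an analytic unit on $V_a\times(-\epsilon_a,\epsilon_a)$ for $\epsilon_a$ small. Thus $h(y,\lambda)=y_1^{\,2}-\lambda(\rho(y)-\lambda^2)Q(y,\lambda)$.

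The central step is to turn $\rho$ into a sum of squares. Since $\rho$ is the pullback of the Euclidean squared distance by a local analytic diffeomorphism, its Hessian at $0$ is positive definite, and Hadamard's lemma gives
\[
\rho(y)=y_1^{\,2}a_{11}(y)+2y_1 y_2\,a_{12}(y)+y_2^{\,2}a_{22}(y)
\]
with $a_{22}(0)>0$ and $a_{11}(0)a_{22}(0)-a_{12}(0)^2>0$. Completing the square on $y_2$ yields $\rho=a_{22}\bigl[y_2+y_1 a_{12}/a_{22}\bigr]^2+y_1^{\,2}\delta(y)$ with $\delta:=a_{11}-a_{12}^{\,2}/a_{22}$ analytic and $\delta(0)>0$. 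Defining $\tilde y_2:=\sqrt{a_{22}(y)}\bigl[y_2+y_1 a_{12}(y)/a_{22}(y)\bigr]$ gives a valid analytic coordinate (its Jacobian at $0$ is $\sqrt{a_{22}(0)}\neq 0$) that preserves $\{y_1=0\}$, and in $(y_1,\tilde y_2)$ coordinates $\rho=\tilde y_2^{\,2}+y_1^{\,2}\delta$.

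Substituting back,
\[
h = y_1^{\,2}\bigl(1-\lambda\delta Q\bigr)-\lambda\tilde y_2^{\,2}Q+\lambda^3 Q = Q\!\left[y_1^{\,2}\,U/Q-\lambda\tilde y_2^{\,2}+\lambda^3\right],
\]
where $U(y,\lambda):=1-\lambda\delta(y)Q(y,\lambda)$ satisfies $U(0,0)=1$, so $U/Q$ is a positive analytic unit near the origin. The $\lambda$-dependent rescaling $\tilde y_1:=y_1\sqrt{U(y,\lambda)/Q(y,\lambda)}$ is then an analytic diffeomorphism over the $\lambda$-axis that still satisfies $\{\tilde y_1=0\}=\{y_1=0\}=A_\Gamma\cap V_a$, and yields the required form $h=Q\bigl[\tilde y_1^{\,2}-\lambda(\tilde y_2^{\,2}-\lambda^2)\bigr]$ with $u:=Q$.

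The only delicate issue is the bookkeeping: each of the three coordinate changes (absorbing $g$ into $y_1$, defining $\tilde y_2$, and rescaling by $\sqrt{U/Q}$) must be checked to be a genuine analytic diffeomorphism over the $\lambda$-axis and to preserve the distinguished zero locus $\{y_1=0\}$. Once positive-definiteness of $\mathrm{Hess}(\rho)(0)$ and positivity of $Q(0,0)$ are in hand, the remainder of the argument is pure algebra.
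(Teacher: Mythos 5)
Your proof is correct and follows essentially the same route as the paper's: straighten $f$ into the first coordinate using the regularity of $A_\Gamma$ at $a$, peel off the factor $\|x-a\|^2-\lambda^2$ while absorbing the remaining factors of the product into a positive unit, complete the square in the squared-distance term, and finish with square-root rescalings of the coordinates. If anything, your treatment is slightly more careful than the paper's, which glosses over the cross terms arising when $z_1^2+z_2^2$ is rewritten in the new coordinates, whereas your use of Hadamard's lemma and explicit completion of the square handles them cleanly.
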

\begin{proof}
Consider the coordinate system $z= x-a$ (which is centered at $a$) and note that in a sufficiently small neighbourhood of $(a,0)$ of the form $U_{a}=V_{a} \times (-\epsilon_a,\epsilon_a)$, we can write
\[
h(z,\lambda) = f(z)^2 - \lambda \left[z_1^2+z_2^2 - \lambda^2 \right] u(z,\lambda)
\]
where $u(z,\lambda) > 0$ at all points in $U_{a}$. Apart from shrinking $U_{a}$, we can suppose that $\nabla f(z) \neq 0$ at all points in $U_{a}$. Therefore (apart from a preliminary rotation) the change of coordinates $\widetilde{y}_1 = u(z,\lambda)^{-\frac{1}{2}} f(z) = \xi z_1 + \psi(z,\lambda)$ (where $\xi\neq 0$ and $\psi(z,\lambda)$ has order at least two) and $\widetilde{y}_2=z_2$ is an isomorphism on $U_{a}$. We get
\[
h(\widetilde{y},\lambda) = u(\widetilde{y},\lambda) \left( \widetilde{y}_1^2 - \lambda \left(\widetilde{y}_1^2  + \widetilde{y}_2^2v \left(\widetilde{y},\lambda\right)  - \lambda^2 \right) \right)
\]
where $v(\widetilde{y},\lambda)$ is an analytic function such that $v(0,0)> 0$. Finally, apart from shrinking $U_{a}$, the change of coordinates $y_1= \widetilde{y}_1 \sqrt{1+\lambda}$ and $y_2 =\widetilde{y}_2 \sqrt{v(\widetilde{y},\lambda)}$ is an isomorphism making
\[
h(y,\lambda) =  u(y,\lambda) \left[ y_1^2 - \lambda \left( y_2^2 - \lambda^2 \right) \right]
\] and $(y_1 = 0)=(\widetilde{y}_1=0)=(f(z)=0) \cap V_a$ as we wanted to prove.
\end{proof}
\begin{figure}
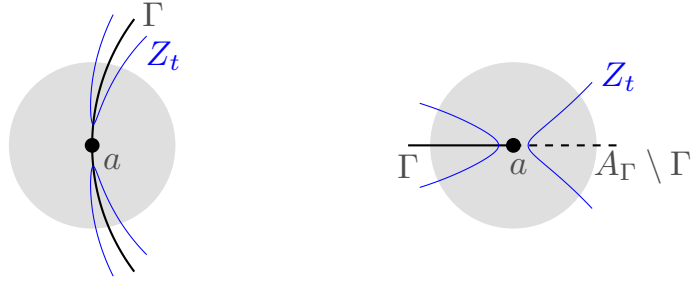


\caption{ A transition point (left) and a generic point (right).}
\label{transgen}
\end{figure}

\begin{proposition}\label{prop:CompactNonGeneric} There exist an open neighbourhood $U$ of $\Gamma \times \{0\}$ and a number $\epsilon>0$ such that, for every $0<t<\epsilon$, $Z_t\cap U$ contains a compact and regular connected component $\gamma_t$ such that $\gamma_t \to \Gamma$ (in the Hausdorff topology) when $t\to 0$. 
\end{proposition}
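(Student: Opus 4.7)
The plan is to cover $\Gamma$ with finitely many neighbourhoods provided by Lemmas~\ref{lem:1} and~\ref{lem:2}, describe the shape of $Z_t$ in each piece, and glue the pieces into a connected curve $\gamma_t$. Since $S = Gen(\Gamma) \cup Tr(\Gamma)$ is finite and $\Gamma$ is compact, I first choose for each $a \in S$ a normal-form neighbourhood $V_a \subset \mathbb{R}^2$ (with associated $\epsilon_a$) as provided by Lemma~\ref{lem:2}, arranging that the closures $\overline{V_a}$ are pairwise disjoint. I then cover the compact remainder $\Gamma \setminus \bigcup_{a\in S} V_a$, which is contained in $Alg(\Gamma)\setminus Tr(\Gamma)$, by finitely many Lemma~\ref{lem:1} neighbourhoods $V_{b_1},\dots,V_{b_m}$. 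Setting $U := \bigcup_{a\in S}(V_a \times (-\epsilon_a,\epsilon_a)) \cup \bigcup_{j}(V_{b_j} \times (-\epsilon_{b_j},\epsilon_{b_j}))$ and taking $\epsilon$ to be the minimum of all the local parameters produces the desired $U$ and $\epsilon$.

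For $0 < t < \epsilon$, the set $Z_t \cap U$ decomposes into well-understood local pieces. By Lemma~\ref{lem:1}, inside each $V_{b_j}$ it consists of one regular connected arc in each connected component of $V_{b_j} \setminus \Gamma$, with each arc converging in the Hausdorff topology to the corresponding piece of $\Gamma \cap V_{b_j}$ as $t \to 0^+$. By Lemma~\ref{lem:2}, inside each $V_a$ (in the normal coordinates $(y,\lambda)$) the set $Z_t \cap V_a$ equals the smooth algebraic curve $\{y_1^2 = t(y_2^2 - t^2)\}$, which splits into two regular \emph{bridge} components $C_{a,+}$ (with $y_2 \geq t$) and $C_{a,-}$ (with $y_2 \leq -t$). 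Each $C_{a,\pm}$ meets the axis $A_\Gamma = \{y_1 = 0\}$ at the single point $(0, \pm t)$, has its two legs extending into the sides $\{y_1 > 0\}$ and $\{y_1 < 0\}$ of $A_\Gamma$, and collapses to the point $a$ as $t \to 0^+$.

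I define $\gamma_t$ by selecting, inside each $V_{b_j}$, the regular arcs sitting in the connected components of $V_{b_j}\setminus \Gamma$ that are adjacent to $\Gamma$, and, inside each $V_a$ (for $a \in S$), the bridges lying on the $\Gamma$-side of $A_\Gamma$---both $C_{a,\pm}$ when $a \in Tr(\Gamma)$ and exactly one when $a \in Gen(\Gamma)$. Regularity of $\gamma_t$ then follows from the local descriptions, compactness from the fact that $\gamma_t$ is closed inside the compact set $\overline{U}\cap(\mathbb{R}^2\times\{t\})$, and Hausdorff convergence $\gamma_t \to \Gamma$ from the local convergence statements in the two lemmas.

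The principal obstacle is establishing that $\gamma_t$ is \emph{connected}. I plan to do this by combinatorial bookkeeping: view $\Gamma$ as a planar $1$-complex whose vertex set contains $S$ (together with any singular points of $A_\Gamma$ lying in $\Gamma$), and identify $\gamma_t$ with the closed curve which, along each $1$-cell, runs as two parallel lateral strands (one on each side) and, at each special vertex $a \in S$, switches sides through the bridges $C_{a,\pm}$. The defining property of $Tr(\Gamma)$---that $\mathbb{R}^2 \setminus (\Gamma \setminus Tr(\Gamma))$ is connected---forces these lateral strands to knit together into a single global cycle; meanwhile, at each $a \in Gen(\Gamma)$ the unique $\Gamma$-side bridge performs the $U$-turn joining the two sides of the incident edge, closing the cycle at the endpoints of $\Gamma$. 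Making this orientation-and-incidence bookkeeping rigorous at every kind of singular algebraic point of $A_\Gamma$ that $\Gamma$ may contain is the technical heart of the argument.
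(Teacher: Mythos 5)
Your local analysis is sound and your overall strategy is legitimate, but the proposal has a genuine gap exactly where you flag it: the connectedness of the curve $\gamma_t$ assembled from local pieces is never established, only announced as a plan. This is not a routine verification. You would need to show (i) that the selected arcs and bridges actually match up endpoint-to-endpoint on the overlaps of the covering neighbourhoods, so that their union is a closed $1$-manifold (and hence a full connected component of $Z_t$) rather than a family of arcs with loose ends or a proper piece of a larger component; and (ii) that this closed $1$-manifold has exactly one component. Point (ii) amounts to a surgery computation on the boundary of a regular neighbourhood of $\Gamma$: without the transition points the lateral strands close up into one curve for each connected component of $\mathbb{R}^2\setminus\Gamma$, and the pair of bridges $C_{a,+},C_{a,-}$ at $a\in Tr(\Gamma)$ performs a surgery merging the two curves that bound the two faces adjacent to $\Gamma$ at $a$; one must then check, using \emph{both} defining properties of $Tr(\Gamma)$ (that $\Gamma\setminus Tr(\Gamma)$ and $\mathbb{R}^2\setminus(\Gamma\setminus Tr(\Gamma))$ are connected, which together force the transition points to realize a spanning tree of the adjacency graph of complementary faces), that each surgery merges two \emph{distinct} curves rather than splitting one, and one must also track the incidences at the star points of $A_\Gamma$ of order different from $2$ lying in $\Gamma$ and at the endpoints in $Gen(\Gamma)$. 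None of this is carried out. A smaller inaccuracy: the bridges $C_{a,\pm}$ do not ``collapse to the point $a$'' as $t\to 0^+$; inside the fixed neighbourhood $V_a$ they converge in the Hausdorff sense to the two half-branches of $A_\Gamma=\{y_1=0\}$, which is in fact what your convergence claim actually requires.

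It is worth noting that the paper avoids this difficulty entirely by running the argument in the opposite direction: it \emph{defines} $\gamma_t$ as the connected component of $Z_t$ meeting a fixed component $W$ of $V_b\setminus\Gamma$ for a single base point $b\in Alg(\Gamma)\setminus Tr(\Gamma)$, so connectedness is automatic, and the work goes into proving that the Hausdorff limit $\gamma_0$ of this component is exactly $\Gamma$ --- no more, via the normal form at the points of $Gen(\Gamma)$, and no less, via a Jordan-curve separation argument combined with the defining property of $Tr(\Gamma)$. If you wish to keep your constructive approach you must supply the full combinatorial argument sketched above; otherwise the component-based formulation is the shorter route to the same statement.
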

\begin{proof}
For every $a \in \Gamma$ take a neighbourhood $V_a$ of $a$ and a number $\epsilon_a>0$ as in Lemma~\ref{lem:1} or \ref{lem:2}. The compacity of $\Gamma$ allows us to take a relatively compact open neighbourhood $U$ of $\Gamma \times \{0\}$, of the form $U = V \times (-\delta,\delta)$ with $V \subset \mathbb{R}^2$ and $\delta>0$, such that $U \subset \bigcup_{a \in \Gamma} U_{a}$. 

Note that, from the two previous lemmas, we can assume that $Z_t \cap U$ is regular for every sufficiently small $t >0$. Also, the continuity of $h$ guarantees that $Z_t \cap U$ converges to $A_{\Gamma} \cap V$ when $t$ tends to $0$ (in the Hausdorff topology). 

Let us fix a point $b \in Alg(\Gamma) \setminus Tr(\Gamma)$ and $W$ a component of $V_b \setminus \Gamma$. For every sufficiently small $t>0$, let us call $\gamma_t$ the connected component of $Z_t$ which meets $W$ (see Lemma \ref{lem:1}). Let $\gamma_0 \subset A_{\Gamma}$ denote the limit of $\gamma_t$ when $t\to 0$ (which contains $b\in \Gamma$). We are then left with the task of proving that $\gamma_0=\Gamma$. 

We start showing that $\gamma_0 \subset \Gamma$. We proceed by contradiction assuming the existence of a point $c \in \gamma_0 \setminus \Gamma$. After shrinking $V$ and $V_a$ for $a\in Gen(\Gamma)$ if necessary, we may suppose that the points $b$ and $c$ lies in different connected components of the set $R=V \setminus \bigcup_{a \in Gen(\Gamma)} V_a$. In particular, $\gamma_t  \cap R$ is disconnected and these disconnected components can only join each other by passing through one of the open sets $U_a$ with $a\in Gen(\Gamma)$. This leads to contradiction with Lemma~\ref{lem:2}.

Since $\gamma_t$ is a connected regular curve, $\RR \setminus \gamma_t$ must consists in exactly two connected components, say $C^1_t$ and $C^2_t$. Now, for every $\epsilon_0>0$, consider the set $\gamma_{\epsilon_0} = \gamma_0 \setminus \cup_{a\in Tr(\Gamma)} B(a,\epsilon_0)$.  We claim that, for every small enough $t>0$, \begin{equation}\label{cl:2} \gamma_{\epsilon_0} \subset \mathbb{R}^2 \setminus \gamma_t.\end{equation} Indeed, let us suppose that $\gamma_{\epsilon_0}$ meets both $C^1_t$ and $C^2_t$ for all small $t>0$. Since $\gamma_t$ can only cross points of $\Gamma$ near $a\in Tr(\Gamma) \cup Gen(\Gamma)$, we conclude that there exists a point $a\in Tr(\Gamma)$ such that $\gamma_t$ crosses $\Gamma \cap V_a$ in such a way that $\gamma_{\epsilon_0} \cap C^1_t \cap V_a$ and $\gamma_{\epsilon_0} \cap C^2_t \cap V_a$ $\Gamma$ are both non-empty. But this gives us again a contradiction with Lemma \ref{lem:2}.

Finally, let us denote by $W_i$, $i=1,\ldots k$, the connected components of $\mathbb{R}^2 \setminus \Gamma$ and set $I$ as the subset of indexes $i \in \{1,\ldots,k\}$ such that $W_i \cap \gamma_t \neq \emptyset$ for all sufficiently small $t>0$ (note that, by construction, $I$ is non-empty). The proof is completed by showing that $I=\{1,\ldots,k\}$ and $\gamma_0=\cup_{i\in I} {\left(\Cl(W_i)\setminus W_i\right)}$. We argue in two steps. 

First, suppose by contradiction that $\gamma_0 \neq \cup_{i\in I} {\left(\Cl(W_i)\setminus W_i\right)}$. Without restriction of generality, we can suppose that $1\in I$ and $ \gamma_0 \cap (\Cl({W_1})\setminus W_1) \neq \Cl(W_1)\setminus W_1$. We consider a point $c \in \Cl(W_1)\setminus W_1$ which does not belong to $\gamma_0$ and an analogous family of ovals $\alpha_t \subset Z_t$ constructed in the same way as $\gamma_t$ but in respect to $c$ and the connected set $W_1$. By \eqref{cl:2}, we conclude that $\alpha_0 \cap \gamma_0$ can only contain points which lie in $Tr(\Gamma)$. This implies that $\Gamma \setminus Tr(\Gamma)$ has at least two disconnected components $\gamma_0 \setminus Tr(\Gamma)$ and $\alpha_0 \setminus Tr(\Gamma)$, which is in contradiction with the definition of $Tr(\Gamma)$.

Next, suppose, by contradiction, that $\gamma_0 = \cup_{i\in I}{\left(\Cl(W_i)\setminus W_i\right)}$ but $I \neq \{1,\ldots, k\}$. Denote by $\Gamma_0 =  \cup_{i\notin I} {\left(\Cl(W_i)\setminus W_i\right)}$. Since the level curve $Z_t$ can only cross points of $\Gamma$ near $a \in Tr(\Gamma) \cup Gen(\Gamma)$ (see Lemmas \ref{lem:1} and \ref{lem:2}), we conclude that $\gamma_0 \cap \Gamma_0 \cap Tr(\Gamma) = \emptyset$. Therefore, $\gamma_0 \cap \Gamma_0 \subset \Gamma \setminus Tr(\Gamma)$ disconnects $\mathbb{R}^2$, which is again in contradiction with the choice of $Tr(\Gamma)$.

We conclude the proof by remarking that, since $\gamma_t$ converges to $\Gamma$, for small enough $t>0$, $\gamma_t$ must be a compact set contained in the interior of $U$. \end{proof}

After noticing that any compact and regular connected component of a planar algebraic set is a topological circle \cite[Theorem~2, p. 180]{KU}, it follows from Proposition~\ref{prop:CompactNonGeneric} that the polynomial family of planar vector fields given by~\eqref{genVF} has $\Gamma$ as a limit periodic set at $\lambda=0$. Indeed, it is enough to prove that, for every sufficiently small $t>0$, the topological circle $\gamma_t \subset \RR$ given by Proposition~\ref{prop:CompactNonGeneric} is a {limit cycle} of $X_t$. To show this, it suffices to note that $${X_{\lambda}(h)=}\left(\frac{\partial h}{\partial x_2} + h \frac{\partial h}{\partial x_1}\right) \frac{\partial h}{\partial_{x_1}} + \left(-\frac{\partial h}{\partial x_1}  + h \frac{\partial h}{\partial x_2}\right) \frac{\partial h}{\partial_{x_2}} = h \left\| \nabla h \right\|^2$$ and, as a consequence, $Z_t$ is an invariant set containing any periodic orbit of $X_t$. Finally, the fact that $\gamma_t$ is regular guarantees that it is a periodic orbit. This proves the converse of Theorem \ref{mainth} (under the extra assumption that $\Gamma$ is compact and generic).

\subsection{Construction of non-generic compact limit periodic sets}\label{nongenericcase}

Let us now fix a connected and compact semialgebraic set $\Gamma \subset \mathbb{R}^2$ {of dimension $0$ or $1$} and $NG(\Gamma) \neq \emptyset$. Denote by $f=f_{\Gamma}$ the polynomial associated to $\Gamma$, $A_{\Gamma}$ the associated algebraic set and $S = Gen(\Gamma) \cup Tr(\Gamma)$. Fix a coordinate system $x= (x_1,x_2)$ of $\mathbb{R}^2$ and parameters $(\alpha,\lambda) \in \mathbb{R}^{2n_{\Gamma}+1}$ where $\alpha = (\alpha^1, \ldots, \alpha^n) \in \mathbb{R}^{2n_{\Gamma}}$. We consider the function
\[
\begin{aligned}
h(x,\alpha,\lambda)&= f(x)^2 - \lambda \prod_{p \in S} \left( \|x-p\|^2 - \lambda^2\right) \prod_{i=1}^n \left( \|x-\alpha^i\|^2 - \lambda^2\right).
\end{aligned}
\]

Let us take the number $n_{\Gamma}$, the sequence $(\alpha_i)_{i \in \N}$ and the point $\alpha_0$ as in Remark~\ref{rk:PointsZ2} and, for every $i \in \mathbb{N}$, let us consider $h_i(x,\lambda) = h(x,\alpha_i,\lambda)$. For any $i \in \N$, we can apply Proposition~\ref{prop:CompactNonGeneric} to the semialgebraic set $\Gamma_i$ introduced in Remark~\ref{rk:PointsZ3} to deduce that there exists a value $0<\lambda_i < \frac{1}{i}$ such that the level set $(h_i(x,\lambda) =0) \cap (\lambda=\lambda_i)$ contains a subset $\gamma_i$ which is regular connected, compact and $\frac{1}{i}$-close (in respect to the Hausdorff topology) to $\Gamma_i$. Furthermore, apart from shrinking $\lambda_i$ if necessary, we can suppose that $\gamma_i \cap NG(\Gamma) = \emptyset$, because $NG(\Gamma) \subset Alg(\Gamma_i)$ and Lemma \ref{lem:1}. In particular, note that $\gamma_i \to \Gamma$ when $i\to \infty$ since $\Gamma_i$ converges to $\Gamma$.

\begin{remark}\label{rk:limitcycles} We note that, for every point $a\in \Gamma$, there exists $N >0$ such that $\gamma_i \cap \{a\} = \emptyset$ for every $i>N$. Indeed, by construction $\gamma_i$ only crosses $\Gamma_i \supset \Gamma$ near the points $Tr(\Gamma) \cup Gen(\Gamma_i)$. So, assuming by contradiction that there exists a point $a \in \Gamma$ so that $\gamma_i \cap \{a\} \neq \emptyset$ for an infinite number of $i$, we conclude that $a \in Tr(\Gamma) \cup Gen(\Gamma) \cup NG(\Gamma)$. Next, by Lemma \ref{lem:2} we conclude that $a\in NG(\Gamma)$, which contradicts the choice of $\lambda_i$.
\end{remark}

It follows from the above considerations (just as in the previous Section) that the algebraic family of vector fields
$$ X_{\alpha,\lambda} = \left( \frac{\partial h}{\partial {x_2}} + h \frac{ \partial h}{\partial x_1}\right)  \partial_{x_1} + \left(- \frac{ \partial h}{\partial {x_1}} h  + h \frac{ \partial h}{\partial {x_2}} \right) \partial_{x_2} $$
has $\Gamma$ as a limit periodic set at $(\alpha,\lambda)=(\alpha_0,0)$.

\subsection{Construction of unbounded limit periodic sets}\label{unboundedcase}

Finally, let $\Gamma \subset \mathbb{R}^2$ be a closed and unbounded semialgebraic set {of dimension $0$ or $1$} whose compactification $\hat{\Gamma}$ is connected. Apart from considering a translation of $\mathbb{R}^2$, we can assume that $(0,0) \notin \Gamma$. 

Let us consider the transition map of the Bendixson compactification $\phi: \mathbb{R}^2\setminus \{0\} \to \mathbb{R}^2$ given by $\phi(x_1,x_2) = (x_1/r,-x_2/r) $ where $r= x_1^2+x_2^2$ (see Section~\ref{bendixson}).

Note that $\phi(\Gamma)$ is a semialgebraic set (by e. g.~\cite[Corollary~1.8]{bm88}), whose closure $Z = \phi(\Gamma) \cup \{0\}$ is a compact and connected semialgebraic set {of dimension $0$ or $1$}. By the previous Sections, there exist a polynomial family of planar vector fields $(Y_{\lambda})_{\lambda \in \Lambda}$ and a parameter $\lambda_0$ such that $Z$ is a limit periodic set for the family $(Y_{\lambda})_{\lambda}$ at $\lambda_0$. We denote by $(z_{\lambda_n})_n$ the sequence of limit cycles of $Y_{\lambda}$ which converge to $Z$. 

Let us now consider the map $\Phi: (\mathbb{R}^2\setminus \{0\}) \times \Lambda \to \mathbb{R}^2 \times \Lambda$ given by $\Phi(x_1,x_2,\lambda) = (\phi(x_1,x_2),\lambda)$. The pull-back $\Phi^{\ast}(Y_{\lambda})$ is rational and there exists an integer $d \geq 0$ such that $X_{\lambda} = (x_1^2+x_2^2)^d \Phi^{\ast}(Y_{\lambda})$ is a polynomial family of vector fields. According to Remark~\ref{rk:limitcycles}, for every sufficiently big $n$, $z_{\lambda_n}$ does not intersect the origin so $\Phi^{-1}(z_{\lambda_n})$ is itself a limit cycles of $X_{\lambda_n}$. It follows from the construction that $\Gamma$ is a limit periodic set of $(X_{\lambda})_{\lambda}$ at $\lambda_0$.

\bibliographystyle{alpha}

\begin{thebibliography}{999}

\bibitem{Adkisson1940} V. W. Adkisson and S. MacLane, 
\textsl{Extending maps of plane {P}eano continua,} 
\newblock {\em Duke Math. J.} \textbf{6} (1940), 216--228.

\bibitem{b12} A. Belotto.
\newblock Analytic varieties as limit periodic sets.
\newblock {\em Qual. Theory Dyn. Syst.} \textbf{11} (2012), no. 2, 449--465.

\bibitem{bm88} E. Bierstone and P. Milman.
\newblock Semianalytic and subanalytic sets.
\newblock {\em Inst. Hautes \'{E}tudes Sci. Publ. Math.} (1988), no. 67, 5--42.

\bibitem{bm08}
E. Bierstone and P. Milman,
\newblock Functoriality in resolution of singularities,
\newblock {\em Publ. R.I.M.S. Kyoto Univ.}, \textbf{44}, (2008), 609--639.

\bibitem{dla06} F. Dumortier, J. Llibre and J. C. Art{\'e}s.
\newblock Qualitative theory of planar differential systems.
\newblock {\em Springer-Verlag}, Berlin, 2006.

\bibitem{drr94} F. Dumortier, R. Roussarie and C. Rousseau. 
\newblock Hilbert's 16th problem for quadratic vector fields. 
\newblock J. Differential Equations \textbf{110}, (1994), no. 1, 86-133.

\bibitem{iy95} Y. Il'yashenko and S. Yakovenko
\newblock {Concerning the Hilbert sixteenth problem}, in ``Concerning the Hilbert 16th Problem'', 1–19. Amer. Math. Soc. Transl. Ser. 2, 165.
\newblock {\em Amer. Math. Soc.}, Providence, RI, 1995. 


\bibitem{fp86} J. P. Fran\c{c}oise and C. Pugh, 
\newblock Keeping track of limit cycles, 
\newblock J. Differential Equations \textbf{65} (1986), no.2, 139--157.

\bibitem{jl07} V. Jim{\'e}nez L\'opez and J. Llibre.
\newblock A topological characterization of the {$\omega$}-limit sets for analytic flows on the plane, the sphere and the projective plane, the sphere and the projective plane.
\newblock {\em Advances in Mathematics} \textbf{216} (2007), no.2, 677--710.
{\bibitem{jp09} V. Jim{\'e}nez L\'opez and D. Peralta-Salas.
\newblock Global attractors of analytic plane flows.
\newblock {\em Ergodic Theory Dyn. Syst.} \textbf{29} (2009), no.2, 967--981.
}

\bibitem{KU} K. Kuratowski, 
\newblock{Topology. Vol. II,} 
\newblock {\em Academic Press}, New York, 1968.


\bibitem{M68} J. Milnor.
\newblock Singular points of complex hypersurfaces. Ann. of Math. Stud., vol. 61.
\newblock {\em Princeton University Press}, New Jersey, 1968.


\bibitem{lr04} J. Llibre and G. Rodr\'iguez,
\newblock Configurations of limit cycles and planar polynomial vector fields.
\newblock {\em J. Differential Equations} \textbf{198}, (2004), no. 2, 374--380.

\bibitem{pr95} D. Panazzolo and R. Roussarie.
\newblock A Poincar\'{e}-Bendixson theorem for analytic families of vector fields.
\newblock {\em Bol. Soc. Brasil. Mat.} (N.S.) \textbf{26} (1995), no. 1, 85--116.

\bibitem{r98} R. Roussarie.
\newblock Bifurcations of Planar Vector Fields and Hilbert's Sixteenth Problem. Progress in Mathematics, vol. 164
\newblock {\em Birkh\"{a}user Verlag}, Basel, 1998.

\bibitem{r89} R. Roussarie.
\newblock Cyclicité finie des lacets et des points cuspidaux.
\newblock {\em Nonlinearity} \textbf{2} (1989), no. 1, 73--117. 

\end{thebibliography}

\end{document}